\documentclass[11pt]{amsart}

\usepackage{amsmath}
\usepackage{amssymb}
\usepackage{graphicx}

\newtheorem{thm}{Theorem}[section]

\newtheorem{lem}[thm]{Lemma}
\newtheorem{prop}[thm]{Proposition}
\newtheorem{definition}[thm]{Definition}
\newtheorem{rem}[thm]{Remark}

\newcommand{\bd}{\begin{displaymath}}
\newcommand{\ed}{\end{displaymath}}
\newcommand{\be}{\begin{equation}}
\newcommand{\ee}{\end{equation}}
\newcommand{\bea}{\begin{eqnarray}}
\newcommand{\eea}{\end{eqnarray}}
\newcommand{\bda}{\begin{eqnarray*}}
	\newcommand{\eda}{\end{eqnarray*}}
\newcommand{\ba}{\begin{array}}
	\newcommand{\ea}{\end{array}}

\newcommand{\mt}{\mapsto}
\newcommand{\st}{\subset}

\newcommand{\sm}{\setminus}

\newcommand{\B}{{\bf B}}

\newcommand{\U}{{\mathcal U}}
\newcommand{\V}{{\mathcal V}}
\newcommand{\W}{{\mathcal W}}

\newcommand{\HH}{{\mathcal H}}

\newcommand{\al}{\alpha}

\newcommand{\sth}{ \, :\;}

\newcommand{\dd}{\mbox{\rm\,d}}
\newcommand{\gr}{\mbox{\rm gph\,}}

\newcommand{\Epi}{\mbox{\rm Epi}\,}

\newcommand{\lmi}{\mathop{\mbox{\rm liminf}}\limits}

\renewcommand{\lll}{\langle}
\newcommand{\rrr}{\rangle}

\newcommand{\p}{{\mathcal P}}

\def\R{\mathbb{R}}
\newcommand{\To}{\rightrightarrows }

\newcommand{\dex}{{\delta x}}

\newcommand{\deu}{{\delta u}}
\newcommand{\dep}{{\delta p}}

\newcommand{\bino}{\bigskip\noindent}

\usepackage[margin=3.2cm]{geometry}

\numberwithin{equation}{section}

\title[Strong bi-metric regularity in affine optimal control problems]{Strong bi-metric regularity in affine optimal control problems}

\author[A. Dom\'inguez Corella]{Alberto Dom\'inguez Corella}\thanks{This research is supported by 
	the Austrian Science Foundation (FWF) under grant No P31400-N32,
	by  the Air Force Office of Scientific Research under award number FA9550-18-1-0254 , 
	by the ``FMJH Program Gaspard Monge in optimization and operation research'' 
	(PGMO 2015-2832H, PGMO 2016-1570H and PGMO 2018-0047H), and from the support to this 
	program from EDF and EDF-THALES-ORANGE-CRITEO}
\address[A. Dom\'inguez Corella]{Institute of Statistics and Mathematical Methods in Economics,
	Vienna University of Technology, Austria}
\email{{\tt alberto.corella@tuwien.ac.at}}

\author[M. Quincampoix]{Marc Quincampoix}
\address[M. Quincampoix]{Univ Brest,  UMR CNRS 6205, Laboratoire de Mathématiques de Bretagne Atlantique 	
	6, Avenue Victor Le Gorgeu, 	29200 Brest, 	France}
\email{\tt Marc.Quincampoix@univ-brest.fr}

\author[V.M. Veliov]{Vladimir M. Veliov}
\address[V.M. Veliov]{Institute of Statistics and Mathematical Methods in Economics,
	Vienna University of Technology, Austria}
\email{\tt vladimir.veliov@tuwien.ac.at}

\keywords{optimal control, metric regularity, affine problems, Euler discretization}

\subjclass[2010]{49K40, 49M25, 49J53}


\begin{document}

\begin{abstract}
The paper presents new sufficient conditions for the property of strong bi-metric regularity of the optimality map
associated with an optimal control problem which is affine with respect to the control variable ({\em affine problem}).
The optimality map represents the system of first order optimality conditions (Pontryagin maximum principle), 
and its regularity is of key importance for the qualitative and numerical analysis of optimal control problems.
The case of affine problems is especially challenging due to the typical discontinuity of the optimal control functions.
A remarkable feature of the obtained sufficient conditions is that they do not require convexity of the objective functional. 
As an application, the result is used for proving uniform convergence of the Euler discretization method 
for a family of affine optimal control problems.
\end{abstract}

\maketitle


\section{Introduction} \label{SIntro}

Regularity properties of the system of first order necessary optimality conditions 
for optimization problems play a key role in qualitative 
analysis and reliable numerical treatment of such problems 
(see e.g. the books \cite{Bonnans+Shapiro,Klatte+Kummer,AD+TR-book2,Ioffe}). 
For optimal control problems, the investigation of regularity properties
of the map associated with the Pontryagin maximum principle (called further {\em optimality map}) 
was first initiated in \cite{DH-93}, which deals with problems that satisfy the so-called coercivity condition. 
The latter, however is never fulfilled for problems in which both dynamics and cost are affine with respect 
to the control (further called {\em affine problems}).   
Results about strong metric sub-regularity of the optimality map for affine problems were obtained in 
the recent papers \cite{PSV,Osmol+Vel-19}. The property of {\em strong metric regularity} 
(see e.g. \cite[Chapter 3]{AD+TR-book2}) of the optimality map
proved to be important for convergence and error estimates of numerical methods (discretizations, 
gradient projection, Newton method, etc.). However, more suitable for affine problems
is a specific extension the strong metric regularity introduced in \cite{QV2013} under the name 
{\em Strong bi-Metric Regularity} (Sbi-MR). The present paper
investigates this property for Lagrange-type affine optimal control problems of the form
\be \label{Eg}
\min \Big\{ J(u) := \int_0^T  [w(t,x(t)) + \lll s(t,x(t)),  u(t)  \rrr ] \dd t \Big\},
\ee
subject to
\be \label{Ex}
\dot x(t) = a(t,x(t))+B(t,x(t)) u(t) ,  \quad  x(0)=x^0,
\ee
\be  \label{Eu}
u(t) \in U, \quad t \in [0,T].
\ee
Here the state vector $x(t)$ belongs to $\R^n$,  the control function $u$ has values $u(t)$ that belong
to a given set $U$ in $\R^m$ for almost every (a.e.)  $t \in [0, T]$.
Correspondingly, $w$ is a scalar function on $[0,T] \times \R^n$, $s$ is an $m$-dimensional vector function 
($\lll \cdot , \cdot \rrr$ denotes the scalar product), $a$ and $B$ are vector-/matrix-valued functions with appropriate 
dimensions. 
The initial state $x^0$ and the final time $T>0$ are fixed.
The set of feasible control functions $u$, denoted in the sequel by $\U$, consists of all Lebesgue measurable
and bounded functions $u: [0,T] \to U$. Accordingly,  the state trajectories $x$, that are solutions of \eqref{Ex}
for feasible controls, are Lipschitz continuous functions on $[0,T]$.

It is well known that the Pontryagin (local) maximum principle can be written in the form of a
generalized equation
\be \label{EFF}
0 \in F(y),
\ee
where $y = (x(\cdot),u(\cdot),p(\cdot))$ encapsulates the state function $x(\cdot)$,
the control function $u(\cdot) \in \U$, and
the adjoint (co-state) function $p(\cdot)$, and the inclusion $0 \in F(y)$ represents the state equation,
the co-state equation, and the maximization condition in the maximum principle
(the last being the inclusion of the derivative of the associated Hamiltonian with respect
to the control in the normal cone to $\U$ at $u(\cdot)$).
The detailed definition of the mapping $F$ in (\ref{EFF}), called further {\em optimality map} 
is given in the next section.

In the next paragraphs we remind the definition of Sbi-MR in the form used in \cite{PSV} and \cite{QSV-18}. 
Let $(Y,d_Y)$,  $(Z,d_Z)$, $(\tilde{Z},d_{\tilde Z})$ be metric spaces,
with $\tilde{Z}\subset Z$ and $d_Z \leq d_{\tilde Z}$  on $\tilde{Z}$.\footnote%
{\label{Fn_d<d} This inequality can be understood as $d_Z(z) \leq c \, d_{\tilde Z}(z)$ for every $z \in \tilde Z$,
	where $c$ is a constant.}
Denote by $\B_Y(\hat y;a)$, $\B_Z(\hat z;b)$ and $\B_{\tilde Z}(\hat z;b)$ the closed balls in the metric spaces
$(Y,d_Y)$, $(Z, d_Z)$ and $(\tilde{Z},d_{\tilde Z})$ with radius $a>0$ or $b>0$ centered at $\hat{y}$ and $\hat{z}$,
respectively.

Given a set-valued map
$\Phi:Y \rightrightarrows Z$, $\gr \Phi:=\lbrace (y,z)\in Y\times Z:\; z\in\Phi(y)\rbrace $
denotes the graph of $\Phi$. The inverse map, $\Phi^{-1}:Z\rightrightarrows Y$,
is the set-valued map defined as $\Phi^{-1}(z):=\lbrace y\in Y: z\in \Phi(y)\rbrace$.

\begin{definition}\label{DSbiMR}
	The set-valued map $\Phi:Y \rightrightarrows Z$ is {\em strongly bi-metrically regular} (Sbi-MR)
	(with disturbance space $\tilde Z$)
	at $\hat{y}\in Y$ for $\hat{z} \in \tilde{Z}$
	with constants $\kappa \geq 0$, $a>0$ and $b>0$, if $(\hat{y},\hat{z}) \in {\rm graph} (\Phi)$
	and the following properties are fulfilled:
	
	(i) the map
	$\B_{\tilde Z}(\hat z;b) \ni z \mapsto \Phi^{-1}(z) \cap \B_Y(\hat{y};a)$ is single-valued;
	
	(ii) for all $z, z'\in \B_{\tilde Z}(\hat{z};b)$
	\be\label{Ereg}
	d_Y(\Phi^{-1}(z)\cap \B_Y(\hat{y};a),\Phi^{-1}(z')\cap \B_Y(\hat{y};a)) \leq \kappa d_Z(z,z').
	\ee
\end{definition}

\bino
We stress that the difference between this notion and the standard notion of strong metric regularity (see e.g. 
\cite[Chapter 3]{AD+TR-book2}) is that the ``disturbances'' $z$ have to belong to the smaller space, $\tilde Z$
(with the bigger distance), but the Lipschitz property in (ii) holds with respect to the smaller distance, $d_Z$,
in the right-side of (\ref{Ereg}). A detailed explanation of the reasons for the appropriateness of this definition
is given in \cite[Introduction]{QSV-18}.

Sufficient conditions for more specific problems and some applications of the Sbi-MR property 
are presented in \cite{PSV} and \cite{QSV-18}. 
The main aim of the present paper is to obtain new, more general, sufficient conditions for 
Strong bi-Metric Regularity (Sbi-MR)
of the optimality map $F$ in an appropriate space setting. A new feature of these conditions is that they involve not
only the second derivative of the associated Hamiltonian with respect to the control, but also its first derivative.
Thanks to that, they may be also fulfilled for problems 
with a non-convex objective functional, which is a new founding in the optimal control context, in general. 

We present the sufficient  conditions for Sbi-MR in Section~\ref{S_suff_cond}
and give a detailed proof in Section~\ref{SProof}. In Section \ref{Sspec} we specialize these conditions 
to the case of affine problems with bang-bang solutions and give an example where they apply to a non-convex problem. 
As an application, in Section \ref{Sunif_discr} we prove that the obtained sufficient conditions imply uniform 
first order convergence of the Euler discretization scheme when applied to affine problems that are close enough
to a reference one. This result is of importance, for example, for the justification of  Model Predictive Control methods 
applied to affine problems.


\section{Sufficient conditions for strong bi-metric regularity} \label{S_suff_cond}

We will use the following standard notations. 
The euclidean norm and the scalar product in $\R^n$ (the elements of which are regarded 
as column-vectors) are denoted by $|\cdot|$ and $\lll \cdot,\cdot \rrr$, respectively. The transpose
of a matrix (or vector) $E$ is denoted by $E^\top$. For a function $\psi :\R^p \to \R^r$ of the variable 
$z$ we denote by $\psi_z(z)$ its derivative (Jacobian), represented by an $(r \times p)$-matrix.
If $r=1$, $\nabla_z \psi(z) = \psi_z(z)^\top$ denotes its gradient (a vector-column of dimension $p$). 
Also for $r=1$, $\psi_{zz}(z)$ denotes the second derivative (Hessian), represented by a $(p \times p)$-matrix.
For a function $\psi :\R^{p + q} \to \R$ of the variables $(z,v)$, $\psi_{zv}(z,v)$ denotes its mixed second
derivative, represented by a $(p \times q)$-matrix. The space $L^k([0,T],\R^r)$, with $k = 1, 2$ 
or $k = \infty$, consists of all (classes of equivalent) Lebesgue measurable $r$-dimensional
vector-functions defined on the interval $[0,T]$, for which the standard norm $\|\cdot\|_k$ is finite. 
Often the specification $([0,T],\R^r)$ will be omitted in the notations. 
As usual, $W^{1,k} = W^{1,k}([0,T],\R^r)$ denotes the space of absolutely continuous
functions $x:[0,T] \to \R^r$ for which the first derivative belongs to $L^k$.
The norm in $W^{1,k}$ is defined as $\| x \|_{1,k} := \| x \|_k  + \| \dot x \|_k$.
Moreover, $\B_X(x;r)$ will denote the ball of radius $r$ centered at $x$ in a metric space $X$. 

Allover the paper we use the abbreviation 
\be \label{Efg}
f(t,x,u) = a(t,x)+B(t,x) u, \qquad g(t,x,u) = w(t,x) + \lll s(t,x),  u \rrr.
\ee
For problem (\ref{Eg})--(\ref{Eu}) we make the following assumption.

\bino
{\em Assumption (A1).} 
The set $U$ is convex and compact; the functions $f: \R \times \R^n \times \R^m \to \R^n$
and $g: \R \times \R^n \times \R^m \to \R$ have the form as in (\ref{Efg}) and
are two times differentiable in $(t,x)$, and the derivatives
are Lipschitz continuous\footnote{\label{FNA1}The assumption of {\em global} Lipschitz 
	continuity is made for convenience. Since the analysis in this paper is local (in a neighborhood of a reference 
	trajectory $\hat x(\cdot)$ in the uniform metric), it can be replaced with {\em local} Lipschitz continuity.}. 

\bino
Define the Hamiltonian associated with problem (\ref{Eg})--(\ref{Eu}) as usual:
\bd
H(t,x,p,u) := g(t,x,u) + \lll p, f(t,x,u) \rrr,\quad  p\in\R^n.
\ed

The local form of the Pontryagin maximum (here minimum) principle for problem (\ref{Eg})-(\ref{Eu})
can be represented by the following optimality system for $(x,u)$ and an absolutely continuous (here Lipschitz) function
$p : [0,T] \to \R^n$: for a.e. $t \in [0,T]$
\bea
\label{EPx}          0 &=&   - \dot x(t) + f(t,x(t),u(t)), \quad x(0) - x^0 = 0, \\
\label{EPp}        0 &=&  \dot p (t) + \nabla_{\!\! x} H(t,x(t),p(t),u(t)), \quad p(T) = 0 ,\\
\label{EPu}       0 &\in& \nabla_u H(t,x(t),p(t),u(t)) + N_U(u(t)), 
\eea
where the normal cone $N_U(u)$ to the set $U$ at $u \in \R^m$ is defined in the usual way,
\bd
N_U(u) = \left\{ \begin{array}{ll} \{y \in \R^n  \mid {\langle y, v-u \rangle} \leq 0
	{\text{ for all } v \in U}\} & {\text{if } u \in U,}\\
	\emptyset & \text{{otherwise.}}\end{array}\right.
\ed

Assumption (A1) implies that there exists a number $M > 0$ such that for any $u \in \U$ the corresponding solution $x$
of (\ref{EPx}) and also the solution  $p$ of (\ref{EPp}) exist on $[0,T]$ and
\be \label{EM}
\max \{ |x(t)|, \,   |\dot x(t)|, \,  |p(t)|, \, |\dot p(t)| \} \leq M \quad \mbox{for a.e. $t \in [0,T]$}.
\ee
In what follows, $\bar M$ will be any number larger that $M$.

Let us introduce the metric spaces
\bd
Y := \{(x,p,u) \in W^{1,1}\times W^{1,1}\times  L^1 : \, 
x(0) = x^0, \, p(T) = 0\}\cap\hspace*{0.04cm}\B^2_{W^{1,\infty}}(0; \bar M)\times\mathcal U.
\ed
and
\bd
Z := L^\infty \times L^\infty \times L^\infty  \mbox{ and } \quad
\tilde Z :=  L^\infty \times L^\infty \times W^{1,\infty} \st Z.
\ed
The distances in these spaces are induced by norms, therefore we keep the norm-notations:
for $y = (x,p,u) \in Y$
\bd
\| y \| := \| x \|_{1,1} +  \| p \|_{1,1} + \| u \|_{1} 
\ed
and for $z = (\xi,\pi,\rho)$ in $Z$ or in $\tilde Z$, respectively,
\bd
\| z \|_Z := \|\xi\|_1 + \|\pi\|_1+\|\rho\|_\infty, \qquad
\| z \|_{\sim} := \|\xi\|_{\infty}+\|\pi\|_{\infty} +\|\rho\|_{1,\infty}.
\ed
Notice that $Y$ is a complete metric space, thanks to the compactness of the set $U$.

Now, we define the set-valued mapping $F : Y \To Z$ as 
\be \label{EOM}
F(y) := \left( \begin{array}{c}
	- \dot x + f(\cdot,x,u) \\
	\dot p + \nabla_{\!\! x} H(\cdot,y) \\
	\nabla_{\!u} H(\cdot,y)
\end{array} \right) 
+ \left( \begin{array}{c}
	0 \\
	0 \\
	N_\U(u)
\end{array} \right),  
\ee
where $N_\U(u)$ is the normal cone to the set $\U$ of admissible controls at $u$, considered as a subset of $L^\infty$:
\bd
N_{\U}(u) := \begin{cases} \emptyset & \mbox{if } u\notin \U \\
	\{ v \in L^\infty \sth v(t)  \in N_U(u(t))  \mbox{ for a.e. $t \in [0,T]$} \} & \mbox{if } u\in \U.
\end{cases}
\ed  
Notice that $F(Y) \st Z$, and $\nabla_{\!u} H(\cdot,y) \in \tilde Z$
thanks to the affine structure of the problem, namely, the independence of $\nabla_{\!u} H(\cdot,y)$ of $u$.

\bino
With these definitions, the necessary optimality conditions (\ref{EPx})--(\ref{EPu}) take the form 
\be \label{EOS}
F(y) \ni 0,
\ee      
therefore $F$ is called {\em optimality map} associated with problem (\ref{Eg})--(\ref{Eu}).
The main result in this paper is a sufficient condition for Sbi-MR of the optimality mapping $F: Y \To Z$ 
with perturbation space $\tilde Z$. To do this we fix a reference solution $\hat y = (\hat x, \hat p, \hat u)$.
We mention that such always exists since on assumption (A1) problem  (\ref{Eg})--(\ref{Eu}) has a solution.
To shorten the notations  we skip arguments with ``hat" in
functions, shifting the ``hat" on the top of the notation of the
function, so that $\hat f(t) := f(t,\hat x(t),\hat u(t))$,
$\hat s(t) = s(t,\hat x(t))$, $\hat H(t) := H(t,\hat x(t),\hat u(t),\hat p(t))$,
$\hat H(t,u) := H(t,\hat x(t),u,\hat p(t))$, etc. Moreover, denote

\begin{align*}
	\hat A(t) := f_x(t,&\hat x(t),\hat u(t)), \quad
	\hat B(t) := f_u(t,\hat x(t),\hat u(t)) =B(t,\hat x(t))\\
	&\hat \sigma(t) := \nabla_u \hat H(t) = \hat B(t)^\top  \hat p(t) + \hat s(t).
\end{align*}

Let us introduce the following functional of $L^1 \ni \deu \mt \Gamma(\deu) \in \R$:
\be \label{EGamma}
\Gamma(\deu) :=\int_0^T \left[ \lll\hat H_{xx}(t) \delta x(t), \dex(t) \rrr +
2 \lll \hat H_{ux}(t) \dex(t), \deu(t) \rrr \right] \dd t,  
\ee
where $\dex$ is the solution of the equation $\dot{\dex} = \hat A \dex + \hat B \deu$ 
with initial condition $\dex(0) = 0$.


\bino
{\em Assumption (A2).}
There exist numbers $c_0$, $\al_0 > 0$ and $\gamma_0 > 0$ such that
\bd
\int_0^T \lll \sigma(t), \deu(t) \rrr  \dd t +  \Gamma(\deu) \geq c_0 \| \deu \|_1^2,
\ed 
for every $\deu = u' - u$ with $u', u \in \U  \cap B_{L^1}(\hat u; \al_0)$, and
for every function $\sigma \in \B_{W^{1,\infty}}(\hat \sigma; \gamma_0)\cap (-N_\U(u))$.

\bino
Assumption (A2) will be analyzed and discussed in details in Section \ref{Sspec}. 
Now we formulate the main theorem.

\begin{thm} \label{TSbi-MR}
	Let Assumption (A1) be fulfilled for problem (\ref{Eg})--(\ref{Eu}) 
	and let $\hat y = (\hat x, \hat p, \hat u)$ be a solution 
	of the optimality system  (\ref{EOS}) (with $F$ defined in (\ref{EOM}))
	for which Assumption (A2) is fulfilled. Let, in addition, the matrix $\hat H_{ux}(t) \hat B(t)$ 
	be symmetric for a.e. $t \in[0,T]$.
	Then the optimality map $F : Y \To Z$ is strongly bi-metrically regular at 
	$\hat y$ for zero with disturbance space $\tilde Z \st Z$.    
\end{thm}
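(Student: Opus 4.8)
The plan is to verify the two defining properties of Sbi-MR (single-valuedness of the localized inverse and Lipschitz dependence on the disturbance in the $d_Z$-metric) by a contraction-type argument built around the linearized optimality system. First I would fix the disturbance $z = (\xi,\pi,\rho) \in \B_{\tilde Z}(0;b)$ and interpret $y = (x,p,u) \in F^{-1}(z)\cap\B_Y(\hat y;a)$ as a solution of the perturbed system: $\dot x = f(\cdot,x,u)+\xi$ with $x(0)=x^0$; $-\dot p = \nabla_x H(\cdot,y)+\pi$ with $p(T)=0$; and $0 \in \nabla_u H(\cdot,y)+\rho+N_U(u(t))$ pointwise. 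The key observation, using the affine structure, is that $\nabla_u H(t,y) = \hat B(t)^\top p(t) + s(t,x(t))$ does not depend on $u$; hence the maximization inclusion determines $u(t)$ as a projection-type selection driven by $(x,p)$ and $\rho$, and the whole system reduces to a fixed-point problem for $(x,p)$. I would linearize along $\hat y$, writing $x = \hat x + \delta x$, $p = \hat p + \delta p$, $u = \hat u + \delta u$, and reformulate membership in $F^{-1}(z)$ as an equation $\delta y = T_z(\delta y)$ whose linear part is governed by the operator appearing in $\Gamma$ and whose residual terms are $O(\|\delta y\|^2)$ plus affine-in-$z$ terms, using Assumption (A1) for the Lipschitz bounds on the derivatives.

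The heart of the argument is a \emph{coercivity/strong-monotonicity estimate}: for two candidate solutions $y = (x,p,u)$ and $y' = (x',p',u')$ in $\B_Y(\hat y;a)$ corresponding to disturbances $z,z'$, I would subtract the two maximization inclusions, pair against $\delta u := u'-u$, and subtract the two adjoint/state equations, pairing appropriately, to produce on one side a quadratic form in $\delta u$ and on the other side terms controlled by $\|z - z'\|_Z$. Here the symmetry hypothesis on $\hat H_{ux}(t)\hat B(t)$ is what makes the cross terms combine into (a perturbation of) the quadratic form $\int \langle \sigma, \delta u\rangle + \Gamma(\delta u)$ from Assumption (A2), rather than leaving an uncontrolled skew part; and $\sigma$ enters as a normal-cone selection lying in $\B_{W^{1,\infty}}(\hat\sigma;\gamma_0)$ precisely because $\rho \in W^{1,\infty}$ is small (this is where the disturbance space $\tilde Z$ and its stronger norm are used). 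Applying (A2) gives $c_0\|\delta u\|_1^2 \le (\text{higher order in } a)\,\|\delta u\|_1^2 + C\|z-z'\|_Z\,\|\delta u\|_1$; shrinking $a$ absorbs the higher-order term, yielding $\|\delta u\|_1 \le \kappa'\|z-z'\|_Z$. Then Gronwall-type estimates for the linear(ized) state and adjoint equations propagate this to $\|\delta x\|_{1,1}$ and $\|\delta p\|_{1,1}$, giving the full estimate $\|y-y'\| \le \kappa\|z-z'\|_Z$, which is both the Lipschitz property (ii) and, taking $z = z'$, uniqueness, i.e. single-valuedness (i).

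What remains is \emph{existence} of a solution in the ball for each small $z$: I would set this up as a fixed-point problem $\delta y = T_z(\delta y)$ on $\B_Y(\hat y;a)$ and show $T_z$ is a self-map and a contraction for $a,b$ small — the self-map property from the a priori estimate above plus the fact that $T_0$ fixes $\hat y$, and the contraction from essentially the same coercivity estimate applied to the increments. One must be careful that the pointwise selection $u(t) = \arg\max$ of the perturbed Hamiltonian over $U$ is well-defined and Lipschitz in the $L^1$-sense with respect to $(x,p,\rho)$; this is where convexity/compactness of $U$ and the structure of $N_U$ are invoked, and where measurability of the selection must be checked. The main obstacle, I expect, is precisely this step: controlling the (generally \emph{discontinuous}) optimal control $u$ only in the $L^1$-norm while the disturbance $\rho$ is measured in $W^{1,\infty}$ — the coercivity in (A2) is stated in $\|\delta u\|_1^2$, so all estimates on $u$ must be done in $L^1$, and one has to verify that the quadratic residual terms (coming from second-order Taylor remainders of $H$ along $\hat y$) are genuinely $o(1)$ in $a$ when measured against $\|\delta u\|_1$, not against a stronger norm; balancing these norms, together with the bi-metric bookkeeping of which disturbance components live in which space, is the delicate part of the proof.
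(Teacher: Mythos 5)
Your uniqueness/Lipschitz argument is essentially the one in the paper: after reducing to the control variable alone (the paper does this in two reduction steps, first replacing $F$ by its partial linearization $L$ via a cited equivalence theorem, then eliminating $(x,p)$ to obtain a map $\Lambda(u)+N_\U(u)$ on $\U$ with disturbances ranging in $W^{1,\infty}$), one subtracts the two variational inequalities, pairs with $u_1-u_2$, identifies $\langle \Lambda'\deu,\deu\rangle_1$ with $\Gamma(\deu)$ by integration by parts, and applies (A2) with $\sigma=\Lambda(u_2)-\Delta_2\in -N_\U(u_2)$. One correction of attribution: the symmetry of $\hat H_{ux}\hat B$ is \emph{not} what makes the cross terms assemble into $\Gamma$ --- that identity is a plain duality computation valid without any symmetry. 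The symmetry is used only to obtain $\bigl\| \tfrac{\dd}{\dd t}(\Lambda(u_1)-\Lambda(u_2))\bigr\|_\infty \le c\,\|u_1-u_2\|_1$ (otherwise a term of size $\|\deu\|_\infty$, not controllable by $\|\deu\|_1$, survives), and this estimate is exactly what guarantees that $\sigma$ stays in the $W^{1,\infty}$-ball around $\hat\sigma$ on which (A2) is applicable. Your instinct that the $\tilde Z$-norm of $\rho$ matters here is correct, but it is only half of what is needed.

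The genuine gap is the existence step. A Banach contraction built on the pointwise argmax selection cannot be made to work under (A2) alone: (A2) is an \emph{integrated} coercivity condition that is available only at pairs $(u,\sigma)$ with $\sigma\in-N_\U(u)$, i.e.\ at solutions of the variational inequality; it provides no Lipschitz modulus for the map $\sigma\mapsto\arg\max$ at non-solution iterates, and the quadratic form $\Gamma$ itself is not coercive --- it can be strictly negative in feasible directions (the paper's Example 1 is directionally non-convex at $\hat u$), so there is no monotonicity to make the iteration contract, and no obvious norm in which $T_z$ is a self-map with modulus below one. The paper circumvents this entirely by a variational existence argument: for each small $\Delta\in W^{1,\infty}$ it minimizes the auxiliary functional $J_0(u)=\int_0^T\bigl[\tfrac12\langle Wx[u],x[u]\rangle+\langle Sx[u],u\rangle+\langle\Delta,u\rangle\bigr]\dd t$ over $\U\cap\B_{L^1}(\hat u;\al)$; a minimizer $\tilde u$ exists by weak $L^1$-compactness of this set and weak continuity of $J_0$ (the map $u\mapsto x[u]$ is compact). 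The resulting first-order condition carries an extra term $N_{\B_{L^1}(\hat u;\al)}(\tilde u)$, which is removed by the normal-cone intersection formula of Burachik--Jeyakumar together with an application of (A2) showing $\|\tilde u-\hat u\|_1\le\|\Delta\|_\infty/c_0<\al$, so that $\tilde u$ is interior to the ball and the extra normal cone is trivial. Without this (or some substitute for it), your proof establishes at most uniqueness and the Lipschitz estimate for solutions that are assumed to exist, which is not Sbi-MR.
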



\section{Proof of the main result} \label{SProof}

The proof of Theorem \ref{TSbi-MR} consists of several steps.

{\bf Step 1.} The following result (adapted to the present problem formulation, assumptions, and notations) 
was proved in \cite[Theorem 3.1]{QSV-18}.\footnote{%
	\label{FnQSV} A Mayer problem is considered in \cite{QSV-18}, but the result also applies to Lagrange problems after 
	a standard transformation. Moreover, the assumptions in \cite{QSV-18} are somewhat weaker than 
	(A1).}

\begin{thm} \label{T_QSV}
	Let the assumptions in Theorem  \ref{TSbi-MR} be satisfied.
	Then strong bi-metric regularity of the set-valued map $y \mt F(y)$
	at $\hat y$ for $0$ (in the spaces as in Theorem  \ref{TSbi-MR}) is equivalent to the strong
	bi-metric regularity of the map $y \mt L(y)$, at $\hat y$ for $0$, where 
	\bda
	L(y) = \left( \begin{array}{c}
		- \dot x + \hat f +\hat A (x - \hat x)  + \hat B (u - \hat u) \\
		\dot p + \nabla_{\!\! x} \hat H + \hat H_{xy} (y - \hat y) \\
		\nabla_{\!u} \hat H  + \hat H_{uy}(y - \hat y) +  N_\U(u)
	\end{array} \right).
	\eda
\end{thm}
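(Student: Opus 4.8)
My plan is to read Theorem~\ref{T_QSV} as an instance of the general principle that strong bi-metric regularity is stable under a first-order (strict) approximation of the single-valued part of the generalized equation --- the bi-metric analogue of the classical Robinson / Dontchev--Rockafellar inverse-function paradigm that underlies the abstract result of \cite{QSV-18}. First I would write $F=\Psi+\mathcal N$, where
\bd
\Psi(y)=\left(\begin{array}{c} -\dot x+f(\cdot,x,u)\\ \dot p+\nabla_{\!x}H(\cdot,y)\\ \nabla_{\!u}H(\cdot,y)\end{array}\right),\qquad \mathcal N(y)=\left(\begin{array}{c}0\\0\\ N_\U(u)\end{array}\right),
\ed
is the smooth (single-valued) part and $\mathcal N$ the normal-cone part. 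A direct computation, using that $f$ is affine in $u$ and $H$ affine in $(p,u)$, shows that the map in the statement is exactly $L=\Lambda+\mathcal N$ with $\Lambda(y)=\Psi(\hat y)+\Psi'(\hat y)(y-\hat y)$ the partial linearization of $\Psi$ at $\hat y$; in particular $\mathcal N$ is common to $F$ and $L$, and $\hat y$ solves both $F(y)\ni 0$ and $L(y)\ni 0$. Setting $R:=\Psi-\Lambda$ (so $R(\hat y)=0$ and $R$ is single-valued) gives $F=L+R$ and $L=F+(-R)$, and the whole theorem reduces to the claim that adding $R$ (resp.\ $-R$) to an Sbi-MR map preserves Sbi-MR. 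I would stress that only the smoothness in Assumption~(A1) and the uniform bounds \eqref{EM} it yields are used here; Assumption~(A2) and the symmetry of $\hat H_{ux}\hat B$ play no role in the \emph{equivalence} and enter only when one later proves that the linear map $L$ is itself Sbi-MR.

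For the stability step I would run the standard contraction argument adapted to the bi-metric norms. Let $\Phi$ be $L$ (resp.\ $F$) with localized single-valued inverse $\Sigma_\Phi(z):=\Phi^{-1}(z)\cap\B_Y(\hat y;a)$, which by hypothesis is Lipschitz with constant $\kappa$ in the \emph{weak} distance $\|\cdot\|_Z$ on $\B_{\tilde Z}(0;b)$. For a disturbance $z\in\tilde Z$ define $T_z(y):=\Sigma_\Phi(z-R(y))$; a fixed point of $T_z$ is precisely a localized solution of $(\Phi+R)(y)\ni z$. The contraction estimate reads
\bd
d_Y(T_z(y),T_z(y'))\le \kappa\,\|R(y)-R(y')\|_Z,
\ed
so only a \emph{weak-norm} Lipschitz smallness of $R$ is needed to close it; the well-definedness of $T_z$, however, requires $z-R(y)\in\B_{\tilde Z}(0;b)$, i.e.\ $R(y)\in\tilde Z$ together with a \emph{strong-norm} bound $\|R(y)\|_\sim\le b-\|z\|_\sim$. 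Granting both, the fixed point $\Sigma_{\Phi+R}(z)$ exists, is unique, and satisfies $d_Y(\Sigma_{\Phi+R}(z),\Sigma_{\Phi+R}(z'))\le \tfrac{\kappa}{1-\kappa\lambda}\|z-z'\|_Z$, where $\lambda$ is the weak-norm modulus of $R$; this is exactly Sbi-MR of $\Phi+R$. Applying this with $\Phi=L$ gives $L\text{ Sbi-MR}\Ra F\text{ Sbi-MR}$, and with $\Phi=F$ and perturbation $-R$ gives the converse, yielding the equivalence symmetrically.

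The technical heart, and the step I expect to be the main obstacle, is verifying the two residual estimates \emph{on one common ball} $\B_Y(\hat y;a)$: the strong-norm smallness $\|R(y)\|_\sim\le Ca$ and the weak-norm Lipschitz smallness $\|R(y)-R(y')\|_Z\le Ca\,\|y-y'\|$, with $C$ independent of $a$. The difficulty is that closeness of $y$ to $\hat y$ in $Y$ is measured in integral ($W^{1,1}\times W^{1,1}\times L^1$) norms, which a priori control neither the sup-norms appearing in $\|\cdot\|_\sim$ nor the $L^1$-smallness of $\deu$. Three structural facts reconcile this. First, since $x(0)=\hat x(0)$ and $p(T)=\hat p(T)=0$, integration upgrades $W^{1,1}$-closeness to sup-closeness, $\|\dex\|_\infty,\|\dep\|_\infty\le\|y-\hat y\|\le a$. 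Second, the definition of $Y$ forces $\|\dot x\|_\infty,\|\dot p\|_\infty\le \bar M$, so the time-derivatives entering $\|R_3\|_{1,\infty}$ stay bounded and get absorbed by the small factors $\dex,\dep$. Third, and crucially, the affine-in-control structure makes $\deu$ enter every component of the remainder $R$ only \emph{bilinearly}, paired with a sup-small $\dex$ or $\dep$ (e.g.\ the first component reduces to $O(\dex^2)+B_x(\cdot,\hat x)\dex\,\deu$), so the integral factor $\|\deu\|_1\le a$ suffices to make these terms $O(a^2)$ in the $L^1$ part of $\|\cdot\|_Z$. Had the problem been nonlinear in $u$, a term quadratic in $\deu$ would survive and destroy the $L^1$ estimate.

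Finally, $R$ does land in $\tilde Z$ because its third component $\nabla_{\!u}H(\cdot,y)=B(\cdot,x)^\top p+s(\cdot,x)$ is $u$-independent and hence lies in $W^{1,\infty}$ whenever $x,p\in W^{1,\infty}$. Taylor's theorem with the Lipschitz derivatives of~(A1) then yields both estimates, and shrinking $a$ makes $\kappa\lambda<1$ and fits $\|R(y)\|_\sim$ into the disturbance ball; the remaining verification of the self-mapping radius and of property~(i) (single-valuedness via uniqueness of the fixed point) is routine constant-chasing once these estimates are in hand.
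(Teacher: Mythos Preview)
The paper does not actually prove Theorem~\ref{T_QSV}: it is invoked as Step~1 of the proof of Theorem~\ref{TSbi-MR} and attributed directly to \cite[Theorem~3.1]{QSV-18}, with a footnote remarking that the Mayer formulation there transfers to the present Lagrange problem and that the hypotheses in \cite{QSV-18} are in fact weaker than~(A1). So there is no in-paper argument to compare your proposal against; what you have written is a self-contained reconstruction of the result that the paper merely cites.

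That said, your sketch is along the correct lines and isolates precisely the points that make the bi-metric version of the Robinson/Dontchev--Rockafellar stability argument go through here. The decomposition $F=L+R$ with $R=\Psi-\Lambda$ single-valued, the fixed-point map $T_z(y)=\Sigma_\Phi(z-R(y))$, and the two-tier requirement --- strong-norm ($\|\cdot\|_\sim$) smallness of $R$ to keep $z-R(y)$ inside the disturbance ball $\B_{\tilde Z}(0;b)$, but only weak-norm ($\|\cdot\|_Z$) Lipschitz smallness to close the contraction --- are exactly the mechanism behind the abstract result of \cite{QSV-18}. Your three structural observations are also the right ones: the shared boundary data upgrade $W^{1,1}$-closeness of $x,p$ to $L^\infty$-closeness; the built-in $W^{1,\infty}$ bound in the definition of $Y$ controls the time-derivatives needed for $\|R_3\|_{1,\infty}$; and the affine-in-$u$ structure ensures both that $R_3\in W^{1,\infty}$ (so $R(Y)\subset\tilde Z$) and that every occurrence of $\deu$ in $R$ is paired with a sup-small factor $\dex$ or $\dep$, so that the $L^1$ control on $\deu$ suffices. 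Your remark that Assumption~(A2) and the symmetry of $\hat H_{ux}\hat B$ are irrelevant to the \emph{equivalence} itself is correct and is implicitly confirmed by the paper's footnote; those hypotheses enter only later, when Sbi-MR of $L$ is established.
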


The map $L$ represents the partial linearization of $F$ around $\hat y = (\hat x, \hat p, \hat u)$.
Thanks to the identity $\hat H_{uu} = 0$, $L$ maps $Y$ to $Z$, 
and moreover, $\hat y$ solves the inclusion $L(\hat y) \ni 0$. 

To shorten the notations, we set for this section (skipping the dependence on $t$)
\bd
W :=  \hat H_{xx}, \;\; S :=  \hat H_{ux}, \;\; A := \hat A =  \hat f_x, \;\; B = \hat B = B(\hat x).
\ed
We remind the already introduced notation $\hat \sigma =  \nabla_{\!u} \hat H$. 
Then, also having in mind the identity $\hat H_{uu} = 0$, we can recast the definition of $L(y)$ as
\bd
L(y) = \left( \begin{array}{c}
	- \dot x + \dot{\hat x} + A (x - \hat x)  + B (u - \hat u) \\
	\dot p - \dot{\hat p}+ W (x - \hat x)  + S^\top (u - \hat u) + A^\top (p - \hat p)\\
	\hat \sigma  + S(x - \hat x) + B^\top (p - \hat p) +  N_\U(u)
\end{array} \right).
\ed
Due to Assumption (A1), we have that $\dot{\hat x} , \, A, \, \dot{\hat p}, \, W, \, \hat \sigma \in L^\infty$, 
and $B, \, S \in W^{1,\infty}$. We remind that according to (\ref{EOS}) and (\ref{EOM}),  
$\hat u$ satisfies the inclusion $\hat \sigma + N_\U(\hat u) \ni 0$.

\bino
{\bf Step 2.} Define the map $\Lambda : L^1 \times \tilde Z \to L^\infty$ in the following way: for $u \in L^1$ and 
$z = (\xi,\pi,\rho) \in \tilde Z$,
\be \label{ELuz}
\Lambda(u,z) := \hat \sigma + S(x[u,z] - \hat x) + B^\top (p[u,z] - \hat p) - \rho,
\ee
where $(x[u,z], p[u,z])$ is the solution of the system 

\begin{align}\label{ELx}
\dot x &= \dot{\hat x} + A (x - \hat x)  + B (u - \hat u) - \xi, \quad x(0) = x^0, \\
\label{ELp}
-\dot p &= -\dot{\hat p} + W (x - \hat x)  + S^\top (u - \hat u) + A^\top (p - \hat p) - \pi, \quad p(T) = 0. 
\end{align}

Further we skip the argument $z$ if $z = 0$, so that $x[u] = x[u,0]$,  $p[u] = p[u,0]$, $\Lambda(u) := \Lambda(u,0)$. 

\begin{lem} \label{LEq_reduct}
	Strong bi-metric regularity of the set-valued map $L$
	at $\hat y$ for $0$ (in the spaces as in Theorem  \ref{TSbi-MR}) is equivalent to 
	strong bi-metric regularity of the map $\Lambda(\cdot,0) + N_\U(\cdot): \U \To L^\infty$ at $\hat u$ for zero,
	with disturbance space $W^{1,\infty} \st L^\infty$. 
\end{lem}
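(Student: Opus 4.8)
The plan is to show that the system $L(y) \ni z$ can be solved for $(x,p)$ in terms of $u$ alone, so that the full inclusion reduces to the ``control-only'' inclusion $\Lambda(u,z) + N_\U(u) \ni 0$, and then to transfer the Sbi-MR property back and forth along this reduction. First I would fix a disturbance $z = (\xi,\pi,\rho) \in \tilde Z$ near zero and observe that the first two coordinate equations of $L(y) \ni z$ are precisely the linear two-point boundary value problem \eqref{ELx}--\eqref{ELp}: the $x$-equation is a forward linear ODE with $x(0) = x^0$, and the $p$-equation is a backward linear ODE with $p(T) = 0$, which is decoupled from the third coordinate. By Assumption (A1) the coefficient functions $A, W, S, B$ are in $L^\infty$ (indeed $B, S \in W^{1,\infty}$), so for each $u \in L^1$ this system has a unique solution $(x[u,z], p[u,z]) \in W^{1,1} \times W^{1,1}$; moreover $(x,p)$ depends affinely and Lipschitz-continuously on $(u,z)$, with the estimate $\|x[u,z] - x[u',z']\|_{1,1} + \|p[u,z] - p[u',z']\|_{1,1} \le c (\|u - u'\|_1 + \|z\|_\sim + \|z'\|_\sim)$ following from Gronwall's inequality (here one also uses that $x[\hat u, 0] = \hat x$, $p[\hat u, 0] = \hat p$). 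Substituting this solution operator into the third coordinate of $L(y) \ni z$ gives exactly $\rho \in \hat\sigma + S(x[u,z] - \hat x) + B^\top(p[u,z] - \hat p) + N_\U(u)$, i.e. $0 \in \Lambda(u,z) + N_\U(u)$.

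Next I would set up the correspondence between points and balls in the two problems. A point $y = (x,p,u)$ with $\|y - \hat y\|$ small corresponds to $u \in \U$ with $\|u - \hat u\|_1$ small, and conversely, given such a $u$, the reconstructed triple $(x[u,z], p[u,z], u)$ lies in a ball around $\hat y$ of comparable radius, by the Gronwall estimate above. Care is needed because the ambient space $Y$ carries its own ball constraint $\B^2_{W^{1,\infty}}(0;\bar M)$; since $\bar M > M$ strictly and the reference trajectory satisfies the bounds \eqref{EM} with the strict constant $M$, for $u$ close enough to $\hat u$ in $L^\infty$ — hence, one must be a little careful, in $L^1$ together with the a priori $L^\infty$ bound coming from $U$ compact — the reconstructed $(x[u,z], p[u,z])$ stays inside that $W^{1,\infty}$-ball, so the constraint is inactive near $\hat y$ and can be ignored in the local analysis. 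With this, single-valuedness of the localized inverse of $L$ is equivalent to single-valuedness of the localized inverse of $\Lambda + N_\U$: an element of $(L + \text{disturbance})^{-1}(z) \cap \B_Y(\hat y; a)$ is determined by its $u$-component via $(x,p) = (x[u,z], p[u,z])$, and that $u$-component is exactly an element of $(\Lambda(\cdot,z) + N_\U)^{-1}(\rho) \cap \B_\U(\hat u; a')$.

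Finally I would match the metrics and the Lipschitz estimates. On the target side, a disturbance $z = (\xi,\pi,\rho) \in \tilde Z$ for $L$ corresponds to a disturbance $\rho \in W^{1,\infty}$ for $\Lambda + N_\U$, and the relevant distances correspond: $\|z\|_\sim$ controls $\|\rho\|_{1,\infty}$ and $\|z\|_Z$ controls $\|\rho\|_\infty$, with the reverse obtained by embedding a scalar disturbance $\rho$ as $(0,0,\rho)$. On the domain side, $\|u - u'\|_1$ is one term of $\|y - y'\|$, and conversely $\|y - y'\| = \|x - x'\|_{1,1} + \|p - p'\|_{1,1} + \|u - u'\|_1 \le (c+1)\|u-u'\|_1 + c(\|z\|_\sim + \|z'\|_\sim)$ by the Gronwall estimate, and when we compare two solutions of the \emph{same} disturbance $z$ (as in the Lipschitz estimate \eqref{Ereg}) the $z$-terms drop out, leaving $\|y - y'\| \le (c+1)\|u - u'\|_1$. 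Combining these, the Lipschitz inequality $d_Y(\cdot,\cdot) \le \kappa\, d_Z(\cdot,\cdot)$ for $L$ holds if and only if the corresponding inequality $\|u - u'\|_1 \le \kappa'\, \|\rho - \rho'\|_\infty$ for $\Lambda + N_\U$ holds, with the constants $\kappa, \kappa'$ adjusted by the Gronwall constant $c$ and the embedding. I expect the main obstacle to be bookkeeping rather than conceptual: one must choose the radii $a, b$ (and the intermediate radii $a', b'$ for the control problem) consistently so that all reconstructed objects stay within the prescribed balls — in particular keeping the $W^{1,\infty}$-ball constraint in $Y$ inactive — and verify that the affine solution operator $z \mapsto (x[u,z],p[u,z])$ is well-behaved with respect to \emph{both} norms on $\tilde Z$ (the $\|\cdot\|_\sim$ and the induced $\|\cdot\|_Z$) simultaneously, which is where the hypothesis $d_Z \le d_{\tilde Z}$ and the affine structure are used.
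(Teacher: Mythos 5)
Your reduction is the same as the paper's: solve the first two (decoupled, linear) equations of $L(y)\ni z$ for $(x,p)=(x[u,z],p[u,z])$, substitute into the third coordinate to get $\Lambda(u,z)+N_\U(u)\ni 0$, and transfer the Sbi-MR property using Gronwall estimates in both norms while keeping the $W^{1,\infty}$-ball constraint in $Y$ inactive. The one place where your bookkeeping as written would fail is the final identification of the disturbance for the reduced problem: it is \emph{not} $\rho$ alone. Writing $l^x(\xi):=x[u,z]-x[u,0]$ and $l^p(\xi,\pi):=p[u,z]-p[u,0]$, the third coordinate reads $\Lambda(u,0)+Q(z)+N_\U(u)\ni 0$ with $Q(z)=S\,l^x(\xi)+B^\top l^p(\xi,\pi)-\rho$, so the effective disturbance fed to $\Lambda(\cdot,0)+N_\U$ is $-Q(z)$; if one used $\rho$ alone, the disturbances $z_1=(\xi,0,0)$ and $z_2=(0,0,0)$ with $\xi\neq 0$ would have identical ``reduced'' disturbances yet different solutions $u$, and your claimed equivalence of the Lipschitz estimates would be false. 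The repair is precisely the two-norm control you anticipate in your last sentence: one needs $\|Q(z)\|_{1,\infty}\le c\,\|z\|_\sim$ (so that small $\tilde Z$-disturbances land in the $W^{1,\infty}$-ball where Sbi-MR of $\Lambda+N_\U$ applies) together with $\|Q(z)\|_\infty\le c'\,\|z\|_Z$ (so that the final Lipschitz estimate comes out in $d_Z$, as required by Definition~\ref{DSbiMR}); with $Q$ introduced this is exactly the paper's argument.
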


\begin{proof}{}  We shall prove that the bi-metric regularity of the map 
	$\Lambda(\cdot,0) + N_\U(\cdot)$ implies that of $L$, which will actually be used later. 
	The proof of the converse is similar and simpler.
	
	For any $z = (\xi,\pi,\rho) \in \tilde Z$ and $u \in L^\infty$, we have from (\ref{ELx}) that
	\be \label{EH61}
	\| l^x(\xi) \|_{1,\infty} \leq c_1 \| \xi \|_\infty, \quad
	\| l^x(\xi) \|_{1,1} \leq c'_1 \| z \|_Z,
	\ee
	where $l^x:L^\infty\to W^{1,\infty}$ is the linear map given by $l^x(\xi):=x[u,z] - x[u,0]$ , and $c_1$ and $c_1'$ are independent of $u$ and $z$.
	Using this and (\ref{ELp}), we obtain (also in a standard way) that
	\be \label{EH62}
	 \| l^p(\xi,\pi) \|_{1,\infty} \leq c_2 ( \| \xi \|_\infty + \| \pi \|_\infty),
	\quad \| l^p(\xi,\pi) \|_{1,1} \leq c'_2 \| z \|_Z,
	\ee  
	where $l^p:L^\infty \times L^\infty\to W^{1,\infty}$ is the linear map given by $ l^p(\xi,\pi):=p[u,z] - p[u,0]$, and $c_2$ and 
	$c'_2$ are constants such as $c_1$ and $c_1'$. Notice that the second inequalities in (\ref{EH61}) and (\ref{EH62}) imply that for a.e. $t \in [0,T]$
	\bd
	\max \{ |x[u,z](t)|, \,   |\dot x[u,z](t)|, \,  |p[u,z](t)|, \, |\dot p[u,z](t)| \} \leq M + c''( \| \xi \|_\infty + \| \pi \|_\infty),
	\ed
	where $c''$ is a constant. This will be used later to ensure that the appearing triples $(u, x[u,z], p[u,z])$ belong to the space $Y$.
	
	We may represent
	\bd
	\Lambda(u,z) = \Lambda(u) + Q(z),    
	\ed
	where 
	\bd
	Q(z) = S l^x(\xi) + B^\top l^p(\xi,\pi) - \rho, \quad \| Q(z) \|_{1,\infty} \leq c_3 \| z \|_\sim
	\ed
	is a linear map and $c_3$ is a constant.
	
	The inclusion $L(y) \ni z$ can be equivalently reformulated as
	\be \label{EH77}
	x = x[u,z], \quad p = p[u,z], \quad \Lambda(u,z) + N_\U(u) \ni 0.
	\ee
	In view of the obtained representations, the last relations are equivalent to
	\bd
	x = x[u] + l^x(\xi), \quad p = p[u] + l^p(\xi,\eta), \quad \Lambda(u) + Q(z) + N_\U(u) \ni 0.
	\ed
	Having in mind the estimations for  $\| l^x(\xi) \|_{1,\infty}$, $\| l^p(\xi,\pi) \|_{1,\infty}$ and $\| Q(z) \|_{1,\infty}$,
	obtaining Sbi-MR of $L$ from that of $\Lambda + N_\U$ becomes a routine task. 
	We will sketch the rest of the proof for completeness.
	
	First we observe that there is a constant $c_4$ such that $\|Q(z)\|_\infty \leq c_4 \| z \|_Z$.
	Let $\kappa$, $\al$ and $\beta$ be the constants in the definition of the Sbi-MR of the map $\Lambda + N_\U$.
	Fix
	\bd
	\bar \al = (c_1'+c_2')\bar{\beta}+\alpha, \qquad \bar \beta = \min\Big\{\frac{\beta}{c_3}, \frac{\bar M - M}{c''} \Big\},
	\qquad \bar \kappa = c'_1 + c'_2 + c_4 \kappa.
	\ed
	For any $z \in \tilde Z$ with $\| z \|_\sim \leq \bar \beta$ we have $\| Q(z) \|_{1,\infty} \leq \beta$.
	Then there exists a unique solution $u(z) \in \B_{L^1}(\hat u;\al)$ of the inclusion $\Lambda(u,z) + N_\U(u) \ni 0$.
	Moreover, for $z_1, \, z_2 \in \tilde Z$ with $\| z_i \|_\sim \leq \bar \beta$ we have 
	\bd
	\| u(z_1) - u(z_2) \|_1 \leq \kappa \| Q(z_1 - z_2) \|_\infty \leq c_4 \kappa \| z_1 - z_2 \|_Z. 
	\ed   
	From the first two relations in (\ref{EH77}) we have for $x(z_i) = x[u(z_i),z_i]$ and $p(z_i) = p[u(z_i),z_i]$
	\bd
	\| x(z_1) - x(z_2) \|_{1,1} + \| p(z_1) - p(z_2) \|_{1,1} \leq c'_1 \| z_1 - z_2 \|_Z + c'_2 \| z_1 - z_2 \|_Z.
	\ed 
	Thus $L$ is Sbi-MR at $\hat y$ for zero with constants $\bar \kappa, \, \bar \al, \, \bar \beta$.
\end{proof}

\bino
{\bf Step 3.} According to Lemma \ref{LEq_reduct}, it is enough to prove
Sbi-MR of $\Lambda + N_\U$ in the spaces specified in the formulation of the lemma.
It is convenient to use the notation 
\bd
\langle v, u \rangle_1 := \int_0^T \langle v(t), u(t) \rangle \dd t
\ed
for the duality pairing of $L^1$ and $L^\infty$, where $v \in L^\infty$ and $u \in L^1$.
The map $\Lambda : L^1 \to L^\infty$ is linear and continuous, and we shall show that its derivative, $\Lambda'$,
satisfies the equality
\be \label{ERep_der}
\langle \Lambda' \deu , \deu \rangle_1 = \Gamma(\deu), \quad \forall \, \deu \in L^1,
\ee
where the mapping $\Gamma: L^1 \to \R$ is defined in (\ref{EGamma}).\footnote%
{\label{FnDual} Similar representations are known, see e.g. in \cite{Hager-90}, but in the space $L^2$. 
	Here the space setting is different and the specificity of the affine problem is essential.}
In the notations introduced in this section the definition of $\Gamma$ reads as 
\be \label{EGamma=}
\Gamma(\deu) = \langle W \dex , \dex \rangle_1 + 2 \langle S \dex, \deu \rangle_1,
\ee
where $\dex$ is the solution of $\dot{\dex} = A \dex + B \deu$ with $\dex(0) = 0$. Let $\dep$ be the solution of 
the equation 
\bd
-\dot{\dep} = A^\top \dep + W \dex + S^\top \deu, \quad \dep(T) = 0.
\ed
Since  $u \mapsto \Lambda(u) := \hat \sigma + S(x[u] - \hat x) + B^\top (p[u] - \hat p)$ is linear, we deduce 
\be \label{lambdaprime} 
\Lambda ' (u) \deu = S \dex + B ^ \top \dep. 
\ee 

Integrating by parts the expression $\langle \dep, \dot{\dex} \rangle_1$ we obtain the equality
\bd
\langle \dep , A \dex + B \deu \rangle_1 =  \langle \dep, \dot{\dex} \rangle_1 =  
- \langle \dex, \dot{\dep} \rangle_1  = \langle \dex,  A^\top \dep + W \dex + S^\top \deu \rangle_1.
\ed
Hence,
\bd
\langle \dep , B \deu \rangle_1 = \langle \dex,  W \dex + S^\top \deu \rangle_1,
\ed
\bd
\langle B^\top \dep , \deu \rangle_1 = \langle W \dex,  \dex \rangle_1 + \langle S \dex , \deu \rangle_1,
\ed
\bd
\langle S \dex , \deu \rangle_1 + \langle B^\top \dep , \deu \rangle_1 = 
\langle W \dex,  \dex \rangle_1 + 2\langle S \dex , \deu \rangle_1 = \Gamma(\deu),
\ed
which implies (\ref{ERep_der})  in view of (\ref{lambdaprime}).

Equality (\ref{ERep_der})  allows to reformulate the inequality in Assumption (A2) as
\be \label{ERefA2}
\int_0^T \lll \sigma(t), \deu(t) \rrr  \dd t + \langle \Lambda' \deu, \deu \rangle_1 \geq c_0 \| \deu \|_1^2
\ee    
with $\sigma$ and $\deu$ as in (A2).

\bino
{\bf Step 4.} Next, we will prove that for every $\al \in (0, \al_0)$ (see Assumption (A2)) 
and for every $\Delta \in W^{1,\infty}$ with $\| \Delta \|_{1,\infty} < c_0 \al$ the inclusion 
\be \label{ELa_Del}
\Lambda(u) + N_\U(u) \ni \Delta 
\ee 
has a solution $\tilde u \in L^1$ satisfying $\| \tilde u - \hat u \|_1 < \al$.
For this, we consider the inclusion
\be \label{EH754}
\Lambda(u) + N_{\U \cap \B_{L^1}(\hat u; \al)}(u) \ni \Delta.
\ee
This inclusion represents the standard  necessary optimality condition for the problem 
\bd
\min \left\{J_0(u) := \int_0^T \Big[ \frac{1}{2}\langle W x[u],x[u] \rangle + 
\langle S x[u], u \rangle + \langle \Delta, u \rangle \Big] \right\},
\ed 
where $x[u]$ is defined around (\ref{ELx}), with the control constraints
$u \in \U$ and $u \in  \B_{L^1}(\hat u; \al)$. This is due to the well-known fact that $\Lambda(u)$ 
is the derivative of $J_0$ at $u$ in $L^1$ (the proof of this fact uses a similar argument as the 
proof of the relation (\ref{ERep_der})). Due to the weak compactness of $\U \cap \B_{L^1}(\hat u;\al)$
in $L^1$, this problem has a solution $\tilde u$, which then is a solution of (\ref{EH754}).

Now we use the relation 
\be \label{EdecomN}
N_{\U \cap \B_{L^1}(\hat u; \al)}(u) = N_{\U}(u) + N_{\B_{L^1}(\hat u; \al)}(u).
\ee
It follows from \cite[Theorem 3.1]{Burachik-04}, which, formulated for the 
particular space setting and sets, $\U \st L^1$ and $\V :=\B_{L^1}(\hat u;\al) \st L^1$, reads as follows:
the equality (\ref{EdecomN}) holds, provided that the set $\Epi s_\U + \Epi s_\V$ is weak${}^*$ closed,
where $\Epi s_\W$ is the epigraph of $s_\W$ and $s_\W : L^\infty \to \R$ is the support function 
to the set $\W \st L^1$, that is, $s_\W(l) := \sup_{w \in \W} \langle l, w \rangle_1$.  The week${}^*$ closedness
of this set is proved in Proposition 3.1, case (i), in \cite{Burachik-04}, which requires (in our case) that  
$\U$ and the interior of $\B_{L^1}(\hat u; \al)$ have a nonempty intersection, which is obviously fulfilled.

Due to (\ref{EdecomN}) and (\ref{EH754}), there exists $\nu \in N_{\B_{L^1}(\hat u; \al)}(\tilde u)$ such that 
\bd 
\nu + \Lambda(\tilde u) - \Delta \in - N_{\U}(\tilde u), 
\ed 
hence,
\bd 
\langle \nu, \hat u - \tilde u  \rangle_1 +  \langle \Lambda(\tilde u) - \Delta , \hat u - \tilde u \rangle_1 \geq 0.
\ed 
We have $ \langle \nu, \hat u - \tilde u  \rangle_1 \leq 0$ since $\hat u \in \B_{L^1}(\hat u; \al)$. Thus
\bd
\langle \Lambda(\tilde u), \hat u - \tilde u \rangle_1  - \langle \Delta , \hat u - \tilde u \rangle_1\geq 0.
\ed
Since $\Lambda$ is linear and satisfies (\ref{ERep_der}), and since $\Lambda(\hat u) = \hat \sigma$ in view of
(\ref{ELuz}), we obtain that
\bda
0 &\geq& \langle \Lambda(\hat u), \tilde u - \hat u \rangle_1 + 
\langle \Lambda(\tilde u) - \Lambda(\hat u), \tilde u - \hat u \rangle_1 
+ \langle \Delta , \hat u - \tilde u \rangle_1 \\ 
&=& \langle \Lambda(\hat u), \tilde u - \hat u \rangle_1 
+  \langle \Lambda'(\tilde u - \hat u), \tilde u - \hat u \rangle_1 + \langle \Delta , \hat u - \tilde u \rangle_1 \\
&=& \langle \hat \sigma, \tilde u - \hat u \rangle_1 
+  \Gamma(\tilde u - \hat u) + \langle \Delta , \hat u - \tilde u \rangle_1 .
\eda
Moreover, we have $\hat \sigma \in - N_\U(\hat u)$. 
Then Assumption (A2) in the form of (\ref{ERefA2}) applied for $\deu = \tilde u - \hat u$ and 
$\sigma = \hat \sigma$ implies that
\bd
0 \geq c_0 \| \tilde u - \hat u \|_1^2 + \langle \Delta , \hat u - \tilde u \rangle_1.
\ed
Hence,
\bd
\| \tilde u - \hat u \|_1 \leq \frac{\| \Delta \|_\infty}{c_0} < \al.
\ed
Since $\tilde u$ belongs to the interior of $\B_{L^1}(\hat u;\al)$, thus $N_{\B_{L^1}(\hat u;\al)}(\tilde u) = \{ 0 \}$, 
we obtain that $\nu = 0$, therefore $\tilde u$ is a solution of the inclusion (\ref{ELa_Del}). 

\bino
{\bf Step 5.} First, we shall estimate $\| \Lambda (u_1) - \Lambda(u_2) \|_{1,\infty}$ for two functions
$u_1, \, u_2 \in L^1$. Denote $\deu = u_1 - u_2$, $\dex = x[u_1] - x[u_2]$, $\dep = p[u_1] - p[u_2]$.
Then there is a constant $c_1$ independent of $u_1$ and $u_2$ such that 
\bd
\| \dex \|_\infty \leq c_1 \| \deu \|_1, \qquad \| \dep \|_\infty \leq c_1 \| \deu \|_1.
\ed
Using the definition of $\Lambda$ and Assumption (A1) we can estimate
\bd
\| \Lambda (u_1) - \Lambda(u_2) \|_{\infty} \leq c_2 \| \deu \|_1
\ed
with some constant $c_2$. Then

\begin{align*}
\Big\| \frac{\dd}{\dd t} (\Lambda(u_1) - \Lambda(u_2)) \Big \|_\infty&\le\| S(A\dex + B \deu) - B^\top (W \dex + S^\top \deu + A^\top \dep)  \|_\infty\\
&\quad+\| \dot S \dex + \dot B^\top \dep \|_\infty\\
&\le c_3 \| \deu \|_1,
\end{align*}

where $c_3$ is another constant and in the last estimate we use the assumed symmetry of 
$S B = \hat H_{ux} \hat B$. Thus
\be \label{EH12}
\| \Lambda (u_1) - \Lambda(u_2) \|_{1,\infty} \leq (c_2 + c_3) \| u_1 - u_2 \|_1 =: c_4 \| u_1 - u_2 \|_1.      
\ee 

Now we choose the number $\al$ in such a way that 
\bd
0 < \al \leq \al_0, \qquad c_0 \al \leq \al_0, \qquad (c_0 + c_4) \al \leq \gamma_0. 
\ed
Consider two disturbances $\Delta_1, \,\Delta_2 \in W^{1,\infty}$
with $\| \Delta_i \|_{1,\infty} < c_0 \al$, and two solutions $u_1, \, u_2 \in \B_{L^1}(\hat u;\al)$ of   
(\ref{ELa_Del}) corresponding to $\Delta_1$ and $\Delta_2$, respectively.

Let $\sigma := \Lambda(u_2) - \Delta_2$, by (\ref{EH12}) we have
\begin{align*}
\| \sigma - \hat \sigma \|_{1,\infty} &\leq \| \Lambda(u_2) - \Lambda(\hat u) \|_{1,\infty} + 
\| \Delta_2 \|_{1,\infty} \\
&\leq c_4 \| u_2 - \hat u \|_1 + c_0 \al\\
& < c_4 \al + c_0 \al \\
&\leq \gamma_0.
\end{align*}
Moreover, we have $\sigma = \Lambda(u_2) - \Delta_2 \in -N_\U(u_2)$ because 
$u_2$ solves the variational inequality (\ref{ELa_Del}) with $\Delta = \Delta_2$. Similarly as in Step 4 we obtain the following chain of inequalities:

\begin{align*}
0 &\geq\langle \Lambda(u_1) - \Delta_1, u_1 - u_2 \rangle_1 \\
&= \langle \Lambda(u_2) - \Delta_2, u_1 - u_2 \rangle_1 + 
\langle \Lambda(u_1) - \Lambda(u_2), u_1 - u_2 \rangle_1 + 
\langle  \Delta_2 - \Delta_1, u_1 - u_2 \rangle_1  \\
&= \langle \sigma, u_1 - u_2 \rangle_1 +
\langle \Lambda'(u_1 - u_2), u_1 - u_2 \rangle_1 + \langle  \Delta_2 - \Delta_1, u_1 - u_2 \rangle_1 \\
 &= \langle \sigma, u_1 - u_2 \rangle_1 +
\Gamma(u_1 - u_2) + \langle  \Delta_2 - \Delta_1, u_1 - u_2 \rangle_1,
\end{align*}
Having in mind also that $\| u_2 - \hat u\|_1 < \al \leq \al_0$, we can apply
Assumption (A2) (in the form as in (\ref{ERefA2})) to the latter inequality. We obtain 
\bd
0 \geq c_0\| u_1 - u_2 \|^2_1 + \langle \Delta_2 - \Delta_1, u_1 - u_2 \rangle_1,
\ed
which implies that $\| u_1 - u_2 \|_1 \leq \frac{1}{c_0} \| \Delta_1 - \Delta_2 \|_\infty$.
This proves the Sbi-MR property of $\Lambda + N_\U$ with constants $\kappa = (c_0)^{-1}$,
$\al$, and $\beta = c_0 \al$. The proof of Theorem \ref{TSbi-MR} is complete.  

\section{Some special cases} \label{Sspec}

We begin with few comments.  
Assumption (A2) with the particular choice $\sigma = \hat \sigma$, reads as
\be \label{EOV-cond}
\int_0^T \lll \hat \sigma(t), \deu(t) \rrr \dd t +  \Gamma(\deu) \geq c_0 \| \deu \|_1^2.
\ee
This inequality, required for all $\deu \in \U - \hat u$, is shown in \cite{Osmol+Vel-19} to be sufficient 
for the property of {\em strong metric sub-regularity}, which is substantially weaker than Sbi-MR.
Moreover, the condition\footnote{\label{FnTaylor} The left-hand side in the next inequality is just the second order Taylor
	expansion of the objective functional $J(u)$ in (\ref{Eg}).}
\bd 
\int_0^T \lll \hat \sigma(t), \deu(t) \rrr \dd t + \frac{1}{2} \Gamma(\deu) \geq c_0 \| \deu \|_1^2,
\quad \forall \,  \deu \in \U - \hat u, \quad \| \deu \|_1 \;  \mbox{ -- small enough},
\ed
is sufficient for strict local optimality of $\hat u$ in an $L^1$-neighborhood. 
This last condition is weaker than (\ref{EOV-cond}), as shown in \cite{Osmol+Vel-19}.

Assumption (A2) is fulfillled on the following (more compact) one. 

\bino
{\em Assumption (A2').}
There exist numbers $c_0$, $\al_0 > 0$ and $\gamma_0 > 0$ such that
\be \label{EA2'}
\int_0^T |\lll \sigma(t), \deu(t) \rrr |  \dd t +  \Gamma(\deu) \geq c_0 \| \deu \|_1^2,
\ee 
for every function $\sigma \in \B_{W^{1,\infty}}(\hat \sigma; \gamma_0)$ 
and for every $\deu \in \U - \U$ with $\| \deu \|_1 \leq \al_0$.

\bino
Obviously (A2') implies (A2), since for $\sigma \in - N_\U(u)$ and $u' \in \U$ it holds that $\lll \sigma(t), u'(t) - u(t) \rrr \geq 0$. 

\bino
Now we focus on the first-order term in (\ref{EA2'}) under an additional condition introduced in \cite{Felge2003}
in a somewhat stronger form and for box-like sets $U$.

\bino
{\em Assumption (B).} The set $U$ is a convex and compact polyhedron. Moreover, there exist numbers 
$\kappa > 0$ and $\tau > 0$ such that for every unit vector $e$ parallel to some edge of $U$ and for every $s \in [0,T]$
for which $\lll \hat \sigma(s), e \rrr = 0$ it holds that
\bd
| \lll \hat \sigma(t), e \rrr | \geq \kappa | t - s |   \qquad t \in [ s - \tau, s + \tau] \cap [0,T].
\ed

The next lemma claims that Assumption (B) remains valid for all functions $\sigma$ close enough to 
$\hat \sigma$ in $W^{1,\infty}$. 

\begin{lem} \label{LB}
	Let assumptions (A1) and (B) be fulfilled. Then  there exist numbers 
	$\kappa' > 0$, $\tau' > 0$ and $\gamma' > 0$ such that for every function 
	$\sigma \in \B_{W^{1,\infty}}(\hat \sigma; \gamma')$, for every unit vector $e$ parallel to some edge of $U$ 
	and for every $s \in [0,T]$ for which $\lll \sigma(s), e \rrr = 0$, it holds that
	\bd
	| \lll \sigma(t), e \rrr | \geq \kappa' | t - s |   \qquad t \in [ s - \tau', s + \tau'] \cap [0,T].
	\ed
\end{lem}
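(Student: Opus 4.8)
The plan is to establish a quantitative, uniform version of the edge-wise growth condition in Assumption (B) by a compactness-plus-continuity argument, exploiting that $U$ has finitely many edges and that the perturbation $\sigma$ is close to $\hat\sigma$ in the strong norm $\|\cdot\|_{1,\infty}$. First I would fix a unit vector $e$ parallel to an edge of $U$ and set $\hat\phi_e(t) := \lll \hat\sigma(t), e \rrr$ and $\phi_e(t) := \lll \sigma(t), e \rrr$; note $\|\phi_e - \hat\phi_e\|_{1,\infty} \le \|\sigma - \hat\sigma\|_{1,\infty} \le \gamma'$, so in particular $\|\phi_e - \hat\phi_e\|_\infty \le \gamma'$ and $\|\dot\phi_e - \dot{\hat\phi}_e\|_\infty \le \gamma'$. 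By (A1) the derivative $\dot{\hat\phi}_e$ is bounded, say $|\dot{\hat\phi}_e(t)| \le L$ for a.e. $t$; hence $|\dot\phi_e(t)| \le L + \gamma'$ for all admissible $\sigma$. This Lipschitz bound on $\phi_e$ is the main tool: the growth is really a statement about the slope of $\phi_e$ near its zeros.

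The core step is a dichotomy argument localized around the zero set of $\hat\phi_e$. Let $\kappa,\tau$ be the constants from (B). Consider the ``danger set'' $D_e := \{ s \in [0,T] : \hat\phi_e(s) = 0 \}$ and its $\tau$-neighborhood; on this neighborhood (B) gives a lower-triangle growth of $|\hat\phi_e|$ of the form $|\hat\phi_e(t)| \ge \kappa |t - s|$ whenever $s \in D_e$ and $|t-s| \le \tau$. Away from $D_e$, i.e. on $[0,T]$ at distance $\ge \tau$ from every zero of $\hat\phi_e$, the function $|\hat\phi_e|$ is bounded below by some $\delta_e > 0$ (continuity of $\hat\phi_e$ on a compact set). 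Now suppose $\sigma$ is close to $\hat\sigma$ and $\phi_e(s) = 0$ for some $s$. If $s$ lies at distance $\ge \tau/2$ from $D_e$, then $|\hat\phi_e(s)| \ge$ some positive constant depending only on $\delta_e$ and the modulus of continuity, contradicting $|\phi_e(s) - \hat\phi_e(s)| \le \gamma'$ once $\gamma'$ is small; so every zero of $\phi_e$ lies within $\tau/2$ of $D_e$. For such a zero $s$, pick $\hat s \in D_e$ with $|s - \hat s| \le \tau/2$; then for $t$ with $|t - s|$ small we have $|\hat\phi_e(t)| \ge \kappa|t - \hat s|$ and $|\phi_e(t)| \ge |\hat\phi_e(t)| - \gamma'$. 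One must convert $|t - \hat s|$ into $|t - s|$: writing $|\hat\phi_e(s)| \le \gamma'$ and $|\hat\phi_e(s)| \ge \kappa|s - \hat s|$ forces $|s - \hat s| \le \gamma'/\kappa$, so $|t - \hat s| \ge |t - s| - \gamma'/\kappa$, giving $|\phi_e(t)| \ge \kappa|t-s| - \kappa\gamma'/\kappa - \gamma' = \kappa|t-s| - 2\gamma'$. This is not yet of the desired triangular form because of the additive $-2\gamma'$ term; the standard fix is to use the uniform Lipschitz bound on $\phi_e$ on a slightly larger window and an interpolation between scales, or more simply to observe that the genuinely dangerous region is $|t - s| \le 4\gamma'/\kappa$, where instead one argues directly: on that tiny interval $\phi_e$ has a zero at $s$ and derivative close to $\dot{\hat\phi}_e$, and near a zero of $\hat\phi_e$ the derivative $\dot{\hat\phi}_e$ is bounded away from zero (this is exactly what $|\hat\phi_e(t)| \ge \kappa|t-s|$ encodes infinitesimally), so $\phi_e$ itself has slope bounded below near $s$, yielding $|\phi_e(t)| \ge \kappa'|t-s|$ with $\kappa' = \kappa/2$, say, for $|t-s| \le \tau'$.

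Concretely, I would take $\tau'$ and $\kappa'$ as follows: since (B) implies $\lms_{t\to s}|\hat\phi_e(t)|/|t-s| \ge \kappa$ at each zero $s$ of $\hat\phi_e$, and since in fact one can show (using the Lipschitz continuity of $\dot{\hat\phi}_e$, were it available — but here only $\dot{\hat\phi}_e \in L^\infty$, so one argues with (B) directly) that $|\dot{\hat\phi}_e(t)| \ge \kappa$ for a.e. $t$ in a punctured neighborhood of each zero of $\hat\phi_e$, choose $\gamma' < \kappa/4$; then $|\dot\phi_e(t)| \ge \kappa - \gamma' \ge 3\kappa/4$ for a.e. $t$ in that punctured neighborhood, so integrating from $s$ (a zero of $\phi_e$) gives $|\phi_e(t)| \ge (3\kappa/4)|t-s|$ on that neighborhood, intersected with $[0,T]$. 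Setting $\kappa' := 3\kappa/4$ and $\tau'$ to be the smaller of $\tau/2$ and the half-width of the punctured neighborhoods (uniform over the finitely many edges), and taking $\gamma'$ to be the minimum over the finitely many edge-directions $e$ of the smallness thresholds arising above, completes the argument. The main obstacle is the passage from the geometric lower bound $|\hat\phi_e(t)| \ge \kappa|t-s|$ in (B) to a pointwise lower bound on $|\dot{\hat\phi}_e|$ near zeros that survives the $W^{1,\infty}$-perturbation: one must be careful that $\hat\phi_e$ is only Lipschitz (not $C^1$), so the derivative bound holds only almost everywhere and the perturbation bound $\|\dot\phi_e - \dot{\hat\phi}_e\|_\infty \le \gamma'$ must be used on the full-measure set where both derivatives exist. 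Once that is set up, the finiteness of the edge set makes the uniformization over $e$ routine.
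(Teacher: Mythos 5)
Your overall strategy coincides with the paper's: localize around the (finitely many) zeros of $\hat\sigma_e:=\lll\hat\sigma,e\rrr$, show that every zero of the perturbed function $\sigma_e$ must lie near one of them, obtain a lower bound on the derivative near those zeros, and integrate from the zero of $\sigma_e$. The gap is precisely at the step you yourself call ``the main obstacle'': you assert that Assumption (B), i.e.\ the integrated bound $|\hat\sigma_e(t)|\ge\kappa|t-s|$ near a zero $s$, yields $|\dot{\hat\sigma}_e(t)|\ge\kappa$ for a.e.\ $t$ in a punctured neighbourhood of $s$, ``arguing with (B) directly''. For a function whose derivative is merely in $L^\infty$ this implication is false. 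Take $\hat\sigma_e$ on $[0,1]$ piecewise linear with $\hat\sigma_e(2^{-n})=2\kappa\,2^{-n}$, equal to the constant $\kappa\,2^{-n}$ on the left half of each dyadic interval $[2^{-n-1},2^{-n}]$ and of slope $4\kappa$ on its right half: then $\hat\sigma_e(t)\ge\kappa t$ for all $t$, yet $\dot{\hat\sigma}_e=0$ on a set of positive measure in every neighbourhood of the zero $s=0$. So the pointwise derivative bound does not follow from (B) alone, and without it the transfer to $\dot\sigma_e$ via $\|\dot\sigma_e-\dot{\hat\sigma}_e\|_\infty\le\gamma'$ and the final integration collapse. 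A second, related omission: even a valid a.e.\ bound $|\dot\sigma_e|\ge 3\kappa/4$ does not give $|\int_s^t\dot\sigma_e|\ge \tfrac{3\kappa}{4}|t-s|$ unless $\dot\sigma_e$ has constant sign on $[s,t]$, which your sketch never establishes.

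The paper closes exactly this hole by exploiting the structure of $\hat\sigma$ before perturbing. Assumption (B) first forces $\hat u$ to be piecewise constant (each $\lll\hat\sigma,e\rrr$ has at most $T/\tau+1$ zeros and $U$ has finitely many edges), whence by (A1) the derivative $\dot{\hat\sigma}$ is piecewise continuous. Near each zero $\hat s_i$ one can then make $\dot{\hat\sigma}_e$ lie within $\kappa/4$ of its one-sided limits $\dot{\hat\sigma}_e^{\pm}(\hat s_i)$, and only at that point does (B) deliver $|\dot{\hat\sigma}_e^{\pm}(\hat s_i)|\ge 3\kappa/4$; this simultaneously provides the derivative lower bound and the constant sign on each side of $\hat s_i$ needed in the integration. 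Inserting this piecewise-continuity step repairs your argument; the remaining ingredients (finiteness and isolation of the zeros, the lower bound $\eta>0$ off a neighbourhood of the zero set, the smallness condition $\gamma'\le\kappa/4$, and constants of the form $\kappa'=\kappa/4$, $\tau'=\delta/2$) then go through essentially as you describe, and indeed uniformly over the finitely many edge directions.
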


\begin{proof}{}
	The proof combines arguments from the proof of Proposition 3.4 in \cite{QSV-18} and 
	the proof of Proposition 4.1, therefore we only sketch it focusing on the differences
	with the proofs mentioned above.
	
	First of all, Assumption (B) implies that the reference control $\hat u$ is piece-wise constant. This follows from the fact that
	$\lll \hat \sigma(t), e \rrr$ has not more than $T/\tau +1$ zeros in $[0,T]$ and $U$ has a finite number of edges. 
	More details are given in the proof of Proposition 4.1 in \cite{Osmol+Vel-19}. 
	
	From the definition of $\hat \sigma$, (A1) and the fact that $\hat u$ is a piece-wise constant function we obtain that 
	$\hat \sigma$ has a piece-wise continuous derivative. Let us fix $e$ as in Assumption (B), and denote 
	$\hat \sigma_e := \lll \hat \sigma(t), e \rrr$. Let $\hat s_1, \ldots, \hat s_k$ be the zeros of $\hat \sigma_e$ in 
	$[0,T]$. For $\delta > 0$ define
	\bd
	\Omega(\delta) := \cup_{i=1}^{k} [\hat s_i - \delta, \hat s_i + \delta].
	\ed
	Choose $\delta > 0$ so small that $\delta < \tau$ and there are no other points of discontinuity of $\dot {\hat \sigma}$
	in $\Omega(\delta)$ except possibly $\hat s_1, \ldots, \hat s_k$. Denote
	\bd
	\dot{\hat \sigma}_e^{-}(\hat s_i) := \lim_{t \rightarrow \hat s_i -0} \; \dot {\hat \sigma}(t),  \quad
	\dot{\hat \sigma}_e^{+}(\hat s_i) := \lim_{t \rightarrow \hat s_i +0} \; \dot {\hat \sigma}(t), \quad i=1, \ldots, k.
	\ed
	By choosing $\delta > 0$ smaller, if needed, we may ensure that 
	\bd
	| \dot {\hat \sigma}(t) -  \dot{\hat \sigma}_e^{-}(\hat s_i) | \leq \frac{\kappa}{4} 
	\;\;\mbox{for} \; t \in [\hat s_i - \delta, \hat s_i],   \quad 
	| \dot {\hat \sigma}(t) -  \dot{\hat \sigma}_e^{+}(\hat s_i) | \leq \frac{\kappa}{4} 
	\;\; \mbox{for} \;t \in [\hat s_i, \hat s_i + \delta].
	\ed
	Then for every $i$ and $t \in [\hat s_i-\delta, \hat s_i ]$ we have from Assumption (B) that
	\bda
	\kappa |t - \hat s_i| &\leq& |\hat \sigma_e(t) - \hat \sigma_e(\hat s_i)| \\
	&=&
	\Big| \int_{\hat s_i}^t  \dot{\hat \sigma}_e(\theta) \dd \theta\Big| \\
	&\leq&
	\int_{\hat s_i}^t  |\dot{\hat \sigma}_e^{-}(\hat s_i) | \dd \theta + 
	\int_{\hat s_i}^t |\dot{\hat \sigma}_e^{-}(\hat s_i) - \dot{\hat \sigma}_e(\theta)| \dd \theta \\
	& \leq& |t - \hat s_i | \, |\dot{\hat \sigma}_e^{-}(\hat s_i)|  + \frac{\kappa}{4}  |t - \hat s_i |
	\eda
	Hence,
	\bd
	|\dot{\hat \sigma}_e^{-}(\hat s_i)| \geq \frac{3 \kappa}{4}.
	\ed
	Analogously we obtain the same estimate for $|\dot{\hat \sigma}_e^{+}(\hat s_i)|$.
	
	Obviously there exists $\eta > 0$ such that
	$| \hat \sigma_e(t) | \geq \eta$ for every $t \in [0,T] \sm \Omega(\delta/2)$. 
	By choosing the number $\gamma \in (0,\kappa/4]$ sufficiently small we have that 
	for every $\sigma \in \B_{W^{1,\infty}}(\hat \sigma;\gamma)$ the function $\sigma_e = \lll \sigma, e \rrr$ has no
	zeros in $ [0,T] \sm \Omega(\delta/2)$. Now let us take an arbitrary $\sigma$ as in the last sentence.
	Let $s$ be an arbitrary zero of $\sigma_e$ in $[0,T]$. Then there exists $\hat s_i$ such that $|s - \hat s_i| \leq \delta/2$.
	For $t \in [ s-\delta/2, s+ \delta/2 ]$ we can estimate
	\begin{align*}
	|\sigma_e(t) | &= \Big| \int_s^t \dot \sigma_e(\theta) \dd \theta \Big|\\
	& \geq \Big| \int_s^t \dot {\hat\sigma}_e(\theta) \dd \theta \Big| -   
	\int_s^t |\dot \sigma_e(\theta) - \dot {\hat\sigma}_e(\theta) | \dd \theta \Big| \\
	&\geq   \Big| \int_s^t \dot {\hat\sigma}_e(\theta) \dd \theta \Big| - \gamma |t-s|.
	\end{align*}
	For the last integral we have
	\bd
	\Big| \int_s^t \dot {\hat\sigma}_e(\theta) \dd \theta \Big| \geq  \Big| \int_s^t  \zeta(\theta) \dd \theta \Big| - 
	\int_s^t |\dot {\hat\sigma}_e(\theta) - \zeta(\theta)| \dd \theta,
	\ed
	where $\zeta(\theta)$ is either $\dot{\hat \sigma}_e^{-}(\hat s_i)$ or $\dot{\hat \sigma}_e^{+}(\hat s_i)$
	depending on whether $\theta < \hat s_i$ or $\theta > \hat s_i$. Thus we can estimate 
	\bd
	|\sigma_e(t) | \geq \frac{3 \kappa}{4} |t-s| - \frac{\kappa}{4} |t-s|  - \gamma |t-s|
	\geq  \frac{\kappa}{4} |t-s|. 
	\ed
	Thus we obtain the claim of the lemma with $\kappa' = \kappa/4$, $\tau' = \delta/2$ and $\gamma' = \gamma$. 
\end{proof}

\begin{prop} \label{PB}
	Let assumptions (A1) and (B) be fulfilled. Then there exist numbers $c_0$, $\al_0 > 0$ and $\gamma_0 > 0$ such that
	\be \label{EB}
	\int_0^T |\lll \sigma(t), \deu(t) \rrr |  \dd t  \geq c_0 \| \deu \|_1^2,
	\ee 
	for every function $\sigma \in \B_{W^{1,\infty}}(\hat \sigma; \gamma_0)$ 
	and for every $\deu \in \U - \U$ with $\| \deu \|_1 \leq \al_0$.
\end{prop}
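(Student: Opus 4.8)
The plan is to bound the integrand $|\lll\sigma(t),\deu(t)\rrr|$ from below by a quantity comparable to $|\deu(t)|$ for $t$ outside small neighbourhoods of the finitely many switching times of $\hat u$, to exploit the transversal linear growth supplied by Lemma~\ref{LB} inside those neighbourhoods, and finally to upgrade the resulting linear lower bound into the quadratic right-hand side $c_0\|\deu\|_1^2$ of \eqref{EB} using that $\|\deu\|_1\le\al_0$ is small. To begin, I would apply Lemma~\ref{LB} to fix $\kappa',\tau',\gamma'>0$ and set $\gamma_0:=\gamma'$. As in the proof of that lemma, (A1) and (B) force $\hat u$ to be piece-wise constant with values at vertices of $U$, and for each of the finitely many unit vectors $e$ parallel to an edge of $U$ the switching function $t\mapsto\lll\hat\sigma(t),e\rrr$ has finitely many zeros; I collect all of them into a set $\{\hat s_1,\dots,\hat s_N\}$ and let $e^{(j)}$ denote the edge direction whose switching function vanishes at $\hat s_j$. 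Choosing $r\in(0,\tau')$ so that the intervals $I_j:=[\hat s_j-r,\hat s_j+r]\cap[0,T]$ are pairwise disjoint, setting $I:=\bigcup_j I_j$, and shrinking $\gamma_0$ and $r$ exactly as in Lemma~\ref{LB}, I obtain a constant $\eta>0$ with $|\lll\sigma(t),e\rrr|\ge\eta$ for every edge direction $e$, every $t\in[0,T]\setminus I$ and every $\sigma\in\B_{W^{1,\infty}}(\hat\sigma;\gamma_0)$, while inside each $I_j$ only $e^{(j)}$ is nearly critical, all other edge directions still satisfying $|\lll\sigma(t),e\rrr|\ge\eta$ there. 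Finally I fix $\al_0>0$ small enough (a small multiple of $r$) for the concentration estimate in the next step to be valid.

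It suffices to prove the bound for deviations $\deu=u-\hat u$ of an admissible control from $\hat u$, the general case $\deu\in\U-\U$ following since $\hat u(t)$ is a common reference vertex. \emph{Away from switchings:} for $t\notin I$ the vertex $\hat u(t)$ is, for every admissible $\sigma$, the unique minimiser of $v\mapsto\lll\sigma(t),v\rrr$ over $U$; since $\deu(t)$ lies in the tangent cone $T_U(\hat u(t))$, which is generated by a pointed family of edge directions on each of which $\lll\sigma(t),\cdot\rrr\ge\eta$, one gets the pointwise estimate $|\lll\sigma(t),\deu(t)\rrr|\ge\eta'|\deu(t)|$ a.e.\ on $[0,T]\setminus I$, with $\eta'>0$ depending only on $U$ and $\eta$. \emph{Near a switching $\hat s_j$:} decompose $\deu(t)$ in the tangent cone at $\hat u(t)$ into its $e^{(j)}$-component of size $\mu(t)$ and a transversal part of total size $\rho(t)$; the transversal part contributes a non-negative amount at least $\eta\,\rho(t)$, and the $e^{(j)}$-component contributes at least $\mu(t)\,|\lll\sigma(t),e^{(j)}\rrr|\ge\kappa'\mu(t)\,|t-s_j^\sigma|$, where $s_j^\sigma$ is the unique zero of $\lll\sigma(\cdot),e^{(j)}\rrr$ in $I_j$. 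Because $\hat u(t)$ still essentially minimises $\lll\sigma(t),\cdot\rrr$ on $I_j$, these two contributions do not cancel, so $|\lll\sigma(t),\deu(t)\rrr|\ge\eta\,\rho(t)+\kappa'\mu(t)|t-s_j^\sigma|$. Since the densities $\mu,\rho$ are bounded and $|\deu(t)|$ is comparable to $\mu(t)+\rho(t)$, integrating over $I_j$ and optimising over the worst distribution of $\mu$ and $\rho$ yields $\int_{I_j}|\lll\sigma,\deu\rrr|\,\dd t\ge c_1\big(\int_{I_j}|\deu|\,\dd t\big)^2$ with $c_1>0$ independent of $\sigma$ and $\deu$; a Cauchy--Schwarz summation over $j=1,\dots,N$ then gives $\int_I|\lll\sigma,\deu\rrr|\,\dd t\ge\frac{c_1}{N}\big(\int_I|\deu|\,\dd t\big)^2$.

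To conclude, I split according to where the $L^1$-mass of $\deu$ concentrates. If $\int_{[0,T]\setminus I}|\deu|\ge\frac12\|\deu\|_1$, the away estimate gives $\int_0^T|\lll\sigma,\deu\rrr|\,\dd t\ge\eta'\int_{[0,T]\setminus I}|\deu|\ge\frac{\eta'}{2}\|\deu\|_1\ge\frac{\eta'}{2\al_0}\|\deu\|_1^2$; otherwise $\int_I|\deu|\ge\frac12\|\deu\|_1$ and the near estimate gives $\int_0^T|\lll\sigma,\deu\rrr|\,\dd t\ge\frac{c_1}{4N}\|\deu\|_1^2$. Thus \eqref{EB} holds with $c_0:=\min\{\eta'/(2\al_0),\,c_1/(4N)\}$. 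The main obstacle is the near-switching analysis: one must track signs with care (the switching function of $e^{(j)}$ changes sign at $s_j^\sigma$ while $\hat u$ switches at $\hat s_j$, and the two instants coincide only as $\gamma_0\to0$), absorb the error produced by this mismatch, and deal with the polyhedral combinatorics — notably the situation in which switching functions of several edge directions vanish near the same time, which forces a simultaneous tangent-cone decomposition of $\deu(t)$ along all of them. The remaining ingredients (the a priori bounds on $x[u],p[u]$, the pointed-cone inequality, and the elementary extremal computation behind $c_1$) are routine.
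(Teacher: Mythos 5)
Your overall architecture --- isolating the finitely many switching times, using uniform positivity of the switching function away from them, exploiting the linear growth supplied by Lemma~\ref{LB} near them, and converting that growth into a quadratic lower bound by an extremal/concentration argument --- is exactly the route the paper intends: its own ``proof'' of Proposition~\ref{PB} is the single remark that, with Lemma~\ref{LB} in hand, one repeats the proof of Proposition~4.1 in \cite{Osmol+Vel-19}. So you are reconstructing the right argument rather than inventing a different one.

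There is, however, a genuine gap at the reduction ``it suffices to prove the bound for $\deu=u-\hat u$, the general case $\deu\in\U-\U$ following since $\hat u(t)$ is a common reference vertex.'' Writing $u'-u''=(u'-\hat u)-(u''-\hat u)$ only yields $|\lll\sigma,u'-u''\rrr|\ge\big||\lll\sigma,u'-\hat u\rrr|-|\lll\sigma,u''-\hat u\rrr|\big|$; lower bounds for the two pieces do not combine into a lower bound for their difference. Worse, the pointwise estimate on which your ``away from switchings'' step rests, $|\lll\sigma(t),\deu(t)\rrr|\ge\eta'|\deu(t)|$, is false for $\deu(t)=u'(t)-u''(t)$ with both points distinct from $\hat u(t)$: for $U=[0,1]^2$ and $\sigma(t)=(1,1)$ the choice $u'(t)=(\e,0)$, $u''(t)=(0,\e)$ gives $\lll\sigma(t),\deu(t)\rrr=0$ while $|\deu(t)|=\e\sqrt{2}$. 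Your tangent-cone argument genuinely needs $\deu(t)$ to lie in the tangent cone to $U$ at the $\sigma(t)$-minimising vertex; this holds for $u-\hat u$, and more generally for $u'-u$ with $\sigma\in -N_\U(u)$ (which is all that is required to verify Assumption (A2), and is what the cited proof actually uses), but not for an arbitrary element of $\U-\U$ unless $m=1$. So either restrict to differences anchored at a pointwise minimiser of $\sigma$, or supply a genuinely new argument for general differences; the one-line reduction cannot stand (and this difficulty is already latent in the proposition's own formulation for $m\ge 2$). In addition, the two issues you flag but leave open --- the sign bookkeeping on the interval between $\hat s_j$ and the perturbed zero $s_j^\sigma$, where the $e^{(j)}$-component of $\lll\sigma(t),\deu(t)\rrr$ is negative and must be shown not to cancel the transversal contribution, and the possibility that several edge switching functions vanish at nearby times --- are precisely the technical content of the proof you would be repeating, so as written the proposal is a correct plan whose hardest steps remain undone.
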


\bino
Having at hand Lemma \ref{LB}, the proof repeats that of  Proposition 4.1 in \cite{Osmol+Vel-19}.

\bino
\begin{rem} \label{Rkappa} {\em
		A more slightly precise modification of the proof of Lemma \ref{LB} shows that the number $\kappa'$ can be taken 
		as any number smaller than $\kappa$ (from Assumption (B)).
		Moreover, the constant $c_0$ in Proposition \ref{PB} is directly related 
		with number $\kappa'$ (thus with $\kappa$). In the simplest case of scalar control and $U = [u_1, u_2]$ 
		is straightforward. As obtained in the proof of Lemma \ref{LB}, Assumption (B) implies in this case that 
		$\hat \sigma$ has finite number of zeros, $\hat s_1, \ldots, \hat s_k$, and $\dot{\hat \sigma}$ is piece-wise continuous.
		If the number $Q$ satisfies 
		\bd
		\lmi_{t \rightarrow \hat s_i} |\dot{\hat \sigma}(t)| \geq Q \quad 1 = 1, \ldots, k,
		\ed
		(the $\lmi$ is taken over all $t$ at which the derivative exists) 
		then a simple calculation shows that the claim of Proposition \ref{PB} holds with any number 
		$c_0 \leq Q/(8 k(u_2 - u_1))$.
}\end{rem}

\bino
{\bf Example 1.} 
This example shows that Sbi-MR of the optimality mapping may hold even for problems that are non-convex,
namely, the objective functional $J$ in (\ref{Eg}) is even directionally non-convex at the optimal control $\hat u$.
Consider the problem 
\bd
\min \Big\{ J(u) := \int_0^1 \Big[ - \frac{\al}{2} (x(t))^2 - \beta x(t) + u(t) \Big] \dd t \Big\},
\ed
subject to 
\bd
\dot x = u, \quad x(0) = 0, \quad u(t) \in [0,1].
\ed
Here $\al$ and $\beta$ are positive parameters satisfying $\beta > 1$, $2 \al \leq \beta$.

The solution of the adjoint equation $\dot p = \al x + \beta$, $p(1) = 0$ is strictly monotone increasing and the 
switching function, $\sigma(t) = p(t) + 1$, is positive at $t= 1$. This implies that only optimal control 
has the structure  
\bd
\hat u(t) = \left\{ \begin{array}{cl}
	1 & \mbox{ for } t \in [0,\tau], \\
	0 & \mbox{ for } t \in (\tau, 1].
\end{array} \right. 
\ed 
The corresponding solutions of the primal and the adjoint equations are
\bd
\hat x(t) = \left\{ \begin{array}{cl}
	t & \mbox{ for } t \in [0, \tau], \\
	\tau & \mbox{ for } t \in (\tau, 1],
\end{array} \right.
\ed
and
\begin{align*}
\hat p(t) = \left\{ \begin{array}{cl}
\frac{\al}{2}(\tau^2 + t^2) + \beta t - \al \tau - \beta & \mbox{ for } t \in [0,\tau], \\
t (\al \tau + \beta) - \al \tau - \beta & \mbox{ for } t \in (\tau, 1].
\end{array} \right.
\end{align*}
A simple calculation shows that for $\beta > 1$ the optimal control $\hat u$ has
\bd
\tau = \frac{ -(\beta - \al) + \sqrt{(\beta - \al)^2 + 4 \al (\beta - 1) } }{2 \al} \, \in \, (0,1). 
\ed
For the corresponding switching function $\hat \sigma = \hat p + 1$ we have 
$\dot {\hat \sigma}(\tau) = \al \tau + \beta > \beta$. 
Then Assumption (B) is fulfilled with $\kappa < \beta$. According to Remark \ref{Rkappa}, 
we have 
\bd
\int_0^1 |\hat \sigma(t) \deu(t)| \dd t \geq \frac{\beta}{2} \| \deu \|^2_1 \qquad \forall \, \deu \in \U - \U
\;   \mbox{ with a sufficiently small $\| \deu \|_1$}.
\ed
Moreover, 
\bd
\Gamma(\deu) = -\int_0^1 \al (\dex(t))^2 \dd t = - \al \int_0^1 \Big(\int_0^t \deu(s) \dd s \Big)^2 \dd t
\geq - \al \| \deu \|^2_1.
\ed
Thus for $2 \al < \beta$ Assumption (A2') is fulfilled and the optimality mapping for the considered problem
is Sbi-MR at $(\hat x, \hat u, \hat p)$ for zero.
On the other hand, considering again the expression for the second variation $\Gamma$, we see that 
$\Gamma(\deu) < 0$,
except some specially constructed control variations $\deu$. Thus the objective functional $J(u)$    
in this example is not convex even directionally at the solution point $\hat u$.

\section{An application: uniform convergence of the Euler discretization} \label{Sunif_discr}

In this section we prove that the sufficient conditions for Sbi-MR given in Theorem \ref{TSbi-MR} imply a 
property that can be called {\em uniform strong sub-regularity} concerning a family of optimal control problems 
``neighboring'' a given reference problem. This property is shown to imply a {\em uniform} error estimate for the 
accuracy of the Euler discretization scheme, applied to any of the problems of the family.

We consider again the reference problem (\ref{Eg})-(\ref{Eu}) together with the fixed solution $(\hat x, \hat p, \hat u)$
of its optimality system (\ref{EPx})--(\ref{EPu}). The assumptions in Theorem \ref{TSbi-MR} will hold in the isection, 
with the additional supposition that $f$ and $g$ are time-invariant. 

Together with the reference problem, we consider a family of problems of the same kind, each defined by a pair
of time-invariant functions $\pi := (\tilde f, \tilde g)$ satisfying Assumption (A1) (with $f$ and $g$ replaced with $\tilde f$ 
and $\tilde g$). Any such pair will be called admissible, and $(\p_{\pi})$ will denote the problem corresponding 
to the pair $\pi$, that is, the problem
\be\label{Eg2}
\min_{u\in\U}\left\lbrace \int_{0}^{T}\tilde g(x(t),u(t)) \dd t \right\rbrace 
\ee
subject to 
\be\label{Es2}
\dot x(t)=\tilde f(x(t),u(t)),\hspace*{0.3cm}x(0)=x^0.
\ee
Due to relation (\ref{EM}), we restrict our consideration to admissible pairs $\pi$ defined on the set
$D:= \B_{\R^n}{(0,\bar M)} \times U$.
Given a positive number $\rho$, we denote by $\HH_\rho$ the set of all admissible pairs 
$\pi = (\tilde f, \tilde g)$ such that 
\be
\|\tilde f - f\|_{1,\infty} + \|\tilde g - g\|_{1,\infty}  \leq \rho,
\ee
where the $W^{1,\infty}$-norms are taken for functions defined on the set $D$.

For a given $\pi=(\tilde f,\tilde g)\in\mathcal H_\rho$, we consider the mapping $\Phi_{\pi}:Y\to Z$ defined by 
\be
\Phi_\pi(x,p,u)=\left( \ba{c}
\dot x - \tilde f(x,u) \\
\dot p +  \nabla_{\!\! x} \tilde H(x,p,u) \\
\nabla_{\!u} \tilde H(x,p,u) + N_\U(u) \ea \right) 
\ee
where $\tilde H$ is the Hamiltonian corresponding to the pair $\pi$, and
where as before $N_\U(u)\subset L^\infty$ is the normal cone to the set $\U$ of admissible controls at $u$.
The following  lemma is technical.

\begin{lem}\label{LSbi}
	Let $\pi=(\tilde f,\tilde g)$ belong to $\HH_\rho$ and  $\varphi_{\pi}:Y\to Z$ be defined as 
	\be\label{Emapphi}
	\varphi_{\pi}(x,p,u)=\left( \ba{c}
	\varphi^1_{\pi}(x,p,u) \\
	\varphi^2_{\pi}(x,p,u) \\
	\varphi^3_{\pi}(x,p,u)
	\ea 
	\right):=\left( \ba{c}
	f(x,u)-\tilde f(x,u) \\
	\nabla_{\!\! x}\tilde H(x,p,u)-\nabla_{\!\! x}  H(x,p,u) \\
	\nabla_{\!u}\tilde  H(x,p,u)-\nabla_{\!u} H(x,p,u)
	\ea \right).
	\ee
	There exists a positive constant $c$ such that 
	
	\be
	d_{Z}(\varphi_{\pi}(y),0)\le c \rho\hspace*{0.5cm}\forall y\in Y.
	\ee
\end{lem}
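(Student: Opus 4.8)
The plan is to estimate each of the three components of $\varphi_\pi(y)$ in the norm $\|\cdot\|_Z = \|\cdot\|_1 + \|\cdot\|_1 + \|\cdot\|_\infty$ and show each is bounded by a constant times $\rho$, uniformly over $y = (x,p,u) \in Y$. The crucial structural fact is that every $y \in Y$ has $x, p$ bounded in $W^{1,\infty}$ by $\bar M$ and $u$ takes values in the compact set $U$; hence all arguments $(x(t),p(t),u(t))$ stay in the fixed compact set $D = \B_{\R^n}(0,\bar M) \times U$ on which the $W^{1,\infty}$-closeness $\|\tilde f - f\|_{1,\infty} + \|\tilde g - g\|_{1,\infty} \leq \rho$ is assumed. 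This is where the definition of $Y$ (the intersection with $\B^2_{W^{1,\infty}}(0;\bar M) \times \U$) does its work.

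First I would handle $\varphi^1_\pi(x,p,u) = f(x,u) - \tilde f(x,u)$: pointwise $|f(x(t),u(t)) - \tilde f(x(t),u(t))| \leq \|f - \tilde f\|_{\infty,D} \leq \rho$, so $\|\varphi^1_\pi(y)\|_1 \leq T\rho$. Next, for $\varphi^2_\pi = \nabla_x \tilde H - \nabla_x H$, I would write out $\nabla_x H(x,p,u) = \nabla_x g(x,u) + (f_x(x,u))^\top p$ (and similarly with tildes), so the difference is $(\nabla_x \tilde g - \nabla_x g) + (\tilde f_x - f_x)^\top p$. Since $\|\tilde g - g\|_{1,\infty} \leq \rho$ controls $\|\nabla_x \tilde g - \nabla_x g\|_{\infty,D}$, and $\|\tilde f - f\|_{1,\infty} \leq \rho$ controls $\|\tilde f_x - f_x\|_{\infty,D}$, while $|p(t)| \leq \bar M$, we get a pointwise bound $(1 + \bar M)\rho$ and hence $\|\varphi^2_\pi(y)\|_1 \leq T(1+\bar M)\rho$. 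The third component $\varphi^3_\pi = \nabla_u \tilde H - \nabla_u H$ is treated the same way but measured in $\|\cdot\|_\infty$: $\nabla_u H(x,u) = \nabla_u g(x,u) + (f_u(x,u))^\top p$, and by the affine structure $f_u = B(x)$, $\nabla_u g = s(x)$ depend only on $x$; the difference is again bounded pointwise by $(1+\bar M)\rho$, so $\|\varphi^3_\pi(y)\|_\infty \leq (1+\bar M)\rho$. Summing the three contributions gives $d_Z(\varphi_\pi(y),0) \leq c\rho$ with $c = c(T,\bar M)$ independent of $y$ and $\pi$.

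The only mild subtlety — and the point I would be careful about — is that the $W^{1,\infty}$-norms in the definition of $\HH_\rho$ are norms of functions on $D$, whereas what is needed in the estimates is only the sup-norm of the functions and their \emph{first derivatives in $x$} (no time derivatives appear, since $f, g, \tilde f, \tilde g$ are time-invariant); so I would just note that $\|\tilde f - f\|_{1,\infty,D}$ dominates both $\|\tilde f - f\|_{\infty,D}$ and $\|\tilde f_x - f_x\|_{\infty,D}$ (up to an absolute constant), and likewise for $g$. There is no genuine obstacle here — the lemma is deliberately labelled "technical" — the work is simply organizing the pointwise bounds and invoking the compactness of $D$ together with the uniform $W^{1,\infty}$-bound on $(x,p)$ that membership in $Y$ guarantees.
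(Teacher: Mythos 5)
Your proposal is correct and follows essentially the same route as the paper's proof: the same componentwise pointwise estimates giving $T\rho$, $(\bar M+1)T\rho$, and $(\bar M+1)\rho$ for the three entries, using the $W^{1,\infty}$-bound on $(x,p)$ from the definition of $Y$ and the compactness of $D$. No gap.
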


\begin{proof}{}
	Let $y=(x,p,u)\in Y$. We estimate each one 
	of the components of $\varphi_{\pi}(y)$. First, 
	
	\bd
	\|\varphi^1_{\pi}(y)\|_1=\| f(x,u)-\tilde f(x,u)\|_1\le T\rho.
	\ed
	In a similar way,
	\begin{align*}
	\|\varphi^2_{\pi}(y)\|_1&=\|\nabla_{\!\! x} \tilde H(x,p,u)-
	\nabla_{\!\! x}  H(x,p,u)\|_1\\
	&\le\|\tilde f_x- f_x\|_1\|p\|_\infty+\|\tilde g_x- g_x\|_1\\&\le(\bar M+1)T\rho.
	\end{align*}
	Analogously, 
	\bd
	\|\varphi^3_{\pi}(y)\|_\infty\le (\bar M+1)\rho	.
	\ed
	The result follows.
\end{proof}

We remind the notion of Strong Metric sub-Regularity (SMsR) for a set-valued mapping $\Phi:Y\to Z$. 
We make use of this notion in the following results.

\begin{definition}
	A set valued mapping $\Phi:Y\to Z$ is Strongly Metrically sub-Regular (SMsR) at $y^*$ 
	for zero if $0\in\Phi(y^*)$ and there exist $a,b>0$ and $\kappa>0$ such that for any 
	$z\in B_{Z}(0,b)$ and any solution $y\in B_Y(y^*,a)$ of the inclusion $z\in\Phi(y)$ it holds that 
	$d_Y(y,y^*)\le\kappa d_Z(z,0)$. We call $a,b$ and $\kappa$ the parameters of SMsR.
\end{definition} 

According to Theorem 3.1 in \cite{Osmol+Vel-19}, Assumption (A2) implies that the optimality map 
$F$ in (\ref{EOM}) is SMsR at $\hat y$ for zero (see Section \ref{Sspec}).
We fix its parameters $a,b>0$ and $\kappa>0$ of SMsR. 

\begin{prop}\label{Tbim}
	Let $\pi$ belong to $\mathcal H_\rho$ and $y^*\in B_Y(\hat y,a)$ be a solution of problem $(\p_\pi)$. 
	There exists a positive constant $\kappa'$ such that
	\be 
	d_Y(\hat y,y^*)\le \kappa'\rho,
	\ee
	for all sufficiently small $\rho$.
\end{prop}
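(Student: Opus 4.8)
The plan is to treat the first–order optimality system of the perturbed problem $(\p_\pi)$ as a small disturbance of the optimality system of the reference problem, and then to exploit the strong metric sub-regularity of the reference optimality map $F$ that has already been recorded. First I would observe that, since the pair $\pi=(\tilde f,\tilde g)$ satisfies Assumption (A1), the Pontryagin maximum principle holds for $(\p_\pi)$, so that a solution $y^*=(x^*,p^*,u^*)$ of $(\p_\pi)$ satisfies $0\in\Phi_\pi(y^*)$. Comparing this with the definitions of $F$ in (\ref{EOM}) and of the residual map $\varphi_\pi$ in Lemma \ref{LSbi}, the inclusion $0\in\Phi_\pi(y^*)$ is equivalent to $z^*\in F(y^*)$ for a triple $z^*=z^*(y^*,\pi)\in Z$ whose three components coincide, up to sign, with those of $\varphi_\pi(y^*)$ (the sign in the state–equation component being immaterial since it does not affect any norm); in particular $d_Z(z^*,0)=d_Z(\varphi_\pi(y^*),0)$.

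Next I would estimate the disturbance and invoke sub-regularity. Lemma \ref{LSbi} provides a constant $c>0$, independent of $\pi\in\HH_\rho$ and of $y^*$, such that $d_Z(z^*,0)=d_Z(\varphi_\pi(y^*),0)\le c\rho$. By Theorem 3.1 in \cite{Osmol+Vel-19}, Assumption (A2) guarantees that $F$ is SMsR at $\hat y$ for zero with the already fixed parameters $a,b,\kappa>0$. Taking $\rho$ small enough that $c\rho\le b$, we have $z^*\in B_Z(0,b)$; since moreover $y^*\in B_Y(\hat y,a)$ by hypothesis and $z^*\in F(y^*)$, the definition of SMsR yields $d_Y(y^*,\hat y)\le\kappa\, d_Z(z^*,0)\le\kappa c\,\rho$. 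Hence the claim holds with $\kappa':=\kappa c$, for every $\rho\le b/c$.

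I do not expect a serious obstacle here: the argument is a direct composition of Lemma \ref{LSbi} with the known sub-regularity of $F$. The only points deserving (routine) attention are the componentwise identification of $\Phi_\pi$ with $F$ and $\varphi_\pi$, including the orientation of the state–equation component, and the fact that the triple $y^*$ genuinely belongs to $Y$ — in particular that its $W^{1,\infty}$-bounds do not exceed $\bar M$; the latter follows from (\ref{EM}) applied to the perturbed dynamics together with the smallness of $\rho$, and is in any case subsumed in the standing hypothesis $y^*\in B_Y(\hat y,a)$.
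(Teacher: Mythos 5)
Your argument is correct and is essentially the paper's own proof: write $0\in\Phi_\pi(y^*)$ as $z^*\in F(y^*)$ with $z^*$ equal (up to irrelevant signs) to $\varphi_\pi(y^*)$, bound $d_Z(z^*,0)\le c\rho$ by Lemma \ref{LSbi}, and apply the SMsR of $F$ at $\hat y$ for zero with the fixed parameters $a,b,\kappa$ to get $\kappa'=c\kappa$. Your extra remarks on the sign of the state-equation component and on $y^*\in Y$ are sound and only make explicit what the paper leaves implicit.
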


\begin{proof}{}
	We can write $\Phi_\pi=\varphi_\pi+F$, where $\varphi_\pi$ is the map 
	$(\ref{Emapphi})$ in Lemma \ref{LSbi} and $F$ is the optimality mapping (\ref{EOM}).
	Let $c>0$ be the constant in that lemma, so that  $d_Z(\varphi_\pi(y),0)\le c\rho$ for all $y\in Y$. We can choose $\rho$ small enough to ensure $\varphi_{\pi}(y)\in B_Z(0,b)$ for all $y\in Y$. Since $y^*$ is a solution of problem $(\p_{\pi})$, the inclusion $0\in \varphi_\pi(y^*)+F(y^*)$ is satisfied. By SMsR, we have the desired inequality with $\kappa':=c\kappa$.
\end{proof}

Analogously as we defined the functional $\Gamma$;   given a $\pi\in \mathcal H$ 
and a reference solution $y^*$ of problem $(\p_\pi)$, we consider the functional 
$\Gamma_{\pi}:L^1\to\mathbb R$ defined in terms of $\pi$ and $y^*$ as in (\ref{EGamma}). Explicitly,

\bd
\Gamma_\pi(\deu) = \int_0^T\Big[\lll \tilde H_{xx}(y^*(t))\dex(t),\dex(t)\rangle+2\langle\,
\tilde H_{ux}(y^*(t))\dex(t),\deu(t)\rrr \Big] \dd t,
\ed
where $\dex$ is the solution of the equation $\dot{\dex}(t) =  \tilde f_x(x^*(t),u^*(t)) \dex(t) + \tilde f_u(x^*(t),u^*(t)) \deu(t)$ 
with initial condition $\dex(0) = 0$.

The following lemma establishes an estimation involving the functionals $\Gamma_{\pi}$ and $\Gamma$.
\begin{lem}\label{Lestgam}
	Let $\pi$ belong to $\mathcal H_\rho$ and $y^*\in B_Y(\hat y,a)$ be a solution of problem $(\p_\pi)$. There exists a constant $\eta>0$ such that 
	\bd
	|\Gamma(v-u^*
	)-\Gamma_{\pi}(v-u^*)|\le \eta \hspace*{0.05cm}\rho\|v-u^*\|_1^2\hspace*{0.5cm}\forall v\in \U,
	\ed
	for all sufficiently small $\rho$.
\end{lem}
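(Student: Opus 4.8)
The plan is to write both $\Gamma$ and $\Gamma_\pi$ through their defining linearized state equations and to control the difference termwise, using Proposition~\ref{Tbim} to convert the $\HH_\rho$-closeness of $\pi$ into closeness of $y^*$ to $\hat y$. Fix $v\in\U$, set $\deu:=v-u^*$, let $\dex$ solve $\dot{\dex}=\hat A\dex+\hat B\deu$, $\dex(0)=0$ (the curve in $\Gamma(\deu)$), and let $\dex^\pi$ solve $\dot{\dex}^\pi=\tilde f_x(x^*,u^*)\dex^\pi+\tilde f_u(x^*,u^*)\deu$, $\dex^\pi(0)=0$ (the curve in $\Gamma_\pi(\deu)$). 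By Proposition~\ref{Tbim}, for $\rho$ small enough $\|\hat y-y^*\|\le\kappa'\rho$, and since $x^*,\hat x$ agree at $t=0$ and $p^*,\hat p$ agree at $t=T$, this upgrades to
\[
\|x^*-\hat x\|_\infty+\|p^*-\hat p\|_\infty+\|u^*-\hat u\|_1\le c\,\rho
\]
with $c$ depending only on the reference data. I would also use that $U$ is compact, that $\|p^*\|_\infty\le\bar M$ (since $y^*\in Y$), and — decisively — that by the affine form~\eqref{Efg} the data $f_u(t,x,u)=B(t,x)$ and $\nabla_uH(t,x,p,u)=s(t,x)+B(t,x)^\top p$ do not depend on $u$, so that $\hat B$, $\tilde f_u(x^*,u^*)$, $\hat H_{ux}$ and $\tilde H_{ux}(y^*)$ are control-free; this is what keeps the merely $L^1$-closeness of the controls harmless.

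First I would dispose of the linearized trajectories by Gronwall: $\|\dex\|_\infty\le c_1\|\deu\|_1$, and, since $\tilde f_x(x^*,u^*)$, $\tilde f_u(x^*,u^*)$ are bounded in $L^\infty$ uniformly for small $\rho$, also $\|\dex^\pi\|_\infty\le c_1\|\deu\|_1$. Then $\dex^\pi-\dex$ solves the linear equation with coefficient $\tilde f_x(x^*,u^*)$, zero initial data and inhomogeneity $(\tilde f_x(x^*,u^*)-\hat A)\dex+(\tilde f_u(x^*,u^*)-\hat B)\deu$, whence $\|\dex^\pi-\dex\|_\infty\le c_2(\|(\tilde f_x(x^*,u^*)-\hat A)\dex\|_1+\|(\tilde f_u(x^*,u^*)-\hat B)\deu\|_1)$. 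Splitting $\tilde f_x(x^*,u^*)-\hat A=(\tilde f_x-f_x)(x^*,u^*)+(f_x(x^*,u^*)-f_x(\hat x,\hat u))$ and using (A1) (Lipschitz derivatives), compactness of $U$, the closeness of $\pi$ in $\HH_\rho$ and the bounds above, one gets $\|\tilde f_x(x^*,u^*)-\hat A\|_1\le c_3\rho$; and $\tilde f_u(x^*,u^*)-\hat B=\tilde B(x^*)-B(\hat x)$ is control-free, hence $\le c_3\rho$ in $L^\infty$. Combining, $\|\dex^\pi-\dex\|_\infty\le c_4\,\rho\,\|\deu\|_1$.

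Next I would split
\[
\Gamma(\deu)-\Gamma_\pi(\deu)=\int_0^T\Big[\langle(\hat H_{xx}-\tilde H_{xx}(y^*))\dex,\dex\rangle+2\langle(\hat H_{ux}-\tilde H_{ux}(y^*))\dex,\deu\rangle\Big]\dd t+R,
\]
where $R$ collects the leftover terms, each carrying a factor $\dex^\pi-\dex$ paired against $\tilde H_{xx}(y^*)$ or $\tilde H_{ux}(y^*)$ (both bounded in $L^\infty$ uniformly for small $\rho$, as the $\HH_\rho$-data are close to the bounded reference data together with their derivatives) and one of $\dex,\dex^\pi,\deu$. Using $\|\dex^\pi-\dex\|_\infty\le c_4\rho\|\deu\|_1$ and $\|\dex\|_\infty,\|\dex^\pi\|_\infty\le c_1\|\deu\|_1$ gives $|R|\le c_5\rho\|\deu\|_1^2$. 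The integral is at most $\|\dex\|_\infty^2\|\hat H_{xx}-\tilde H_{xx}(y^*)\|_1+2\|\dex\|_\infty\|\deu\|_1\|\hat H_{ux}-\tilde H_{ux}(y^*)\|_\infty$; so the proof reduces to the two coefficient estimates $\|\hat H_{ux}-\tilde H_{ux}(y^*)\|_\infty\le c_6\rho$ and $\|\hat H_{xx}-\tilde H_{xx}(y^*)\|_1\le c_6\rho$, after which the integral is $\le c_7\rho\|\deu\|_1^2$ and $\eta:=c_5+c_7$ (depending only on the reference data) finishes the proof.

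These two coefficient estimates are the crux. For $\hat H_{ux}-\tilde H_{ux}(y^*)$ the situation is benign: from the affine form this matrix involves only first-order $x$-derivatives of the data (entries of $s_x$, $B_x$) multiplied by $\hat p$, resp. $p^*$, with no control, so it is $O(\rho)$ in $L^\infty$ by the closeness of $\pi$ in $\HH_\rho$, (A1), $\|x^*-\hat x\|_\infty+\|p^*-\hat p\|_\infty\le c\rho$ and $\|p^*\|_\infty\le\bar M$. For $\hat H_{xx}-\tilde H_{xx}(y^*)$ one must be more careful: it involves the second $x$-derivatives of the data (whose differences are controlled by the closeness of $\pi$ in $\HH_\rho$) and also the control, through terms $\sum_j u_j(s_j)_{xx}$ and $\sum_{i,j}p_iu_j(B_{ij})_{xx}$; near a switching point of the reference, $u^*-\hat u$ is of order one rather than $\rho$, so only an $L^1$-estimate is available there — the second-derivative factors being bounded by (A1) and the control entering only through $\|u^*-\hat u\|_1\le c\rho$. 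This is precisely why the decomposition above pairs $\hat H_{xx}-\tilde H_{xx}(y^*)$ with $\langle\dex,\dex\rangle$, which contributes two $L^\infty$ factors that absorb this $L^1$ bound, whereas the control-free $\hat H_{ux}-\tilde H_{ux}(y^*)$ is the one paired with $\deu$; the remaining pieces of both Hessian differences are handled through $\|x^*-\hat x\|_\infty$, $\|p^*-\hat p\|_\infty$ and the data closeness.
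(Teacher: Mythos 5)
Your proposal is correct and follows essentially the same route as the paper's proof: the same Gronwall estimates $\|\dex\|_\infty,\|\dex^\pi\|_\infty\le c\|\deu\|_1$ and $\|\dex-\dex^\pi\|_\infty\le c\rho\|\deu\|_1$, the same termwise decomposition of $\Gamma-\Gamma_\pi$ into coefficient-difference and trajectory-difference pieces, and the same two key bounds $\|\hat H_{xx}-\tilde H_{xx}^*\|_1\le c\rho$ and $\|\hat H_{ux}-\tilde H_{ux}^*\|_\infty\le c\rho$ obtained from Proposition~\ref{Tbim} and (A1). Your added explanation of \emph{why} the $H_{xx}$ difference must be taken in $L^1$ (the control enters it and is only $L^1$-close) while the control-free $H_{ux}$ difference can be taken in $L^\infty$ makes explicit what the paper leaves implicit, but the argument is the same.
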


\begin{proof}{}
	Using Proposition $\ref{Tbim}$ and the Lipschitz continuity of the functions involved, we can find positive 
	constants $c_w$ and $c_s$ such that
	\be\label{Edesi}
	\|\hat H_{xx}-\tilde H_{xx}^*\|_1\le c_w \rho,
	\ee 
	and
	\be
	\|\hat H_{ux}-\tilde H_{ux}^*\|_\infty\le c_s \rho.
	\ee
	Let $v\in\U$ and $v'=v-u^*$, we denote by $\delta\hat x$ and $\dex^*$ the solutions of 
	\be
	\dot x= \hat Ax+\hat Bv',\hspace*{0.2cm}x(0)=0,\hspace{1cm}
	\dot x= \tilde A^*x+\tilde B^*v',\hspace*{0.2cm}x(0)=0,
	\ee
	respectively. There exist positive constants $d_1$ and $d_2$ such that 
	\be
	\max\left\lbrace \|\delta \hat x\|_\infty,\|\dex^*\|_\infty\right\rbrace \le d_1\|v'\|_1,
	\ee
	and 
	\be\label{Edesf}
	\|\delta \hat x-\dex^*\|_\infty\le d_2\rho\|v'\|_1.
	\ee
	Now,
	\begin{align*}\displaystyle
	|\Gamma(v')-\Gamma_{\pi}(v')|&\le\left|\int_{0}^{T}\left[\langle\,\hat{H}_{xx}\delta \hat x,\delta \hat x\rangle-
	\langle\,\tilde H^*_{xx}\delta x^*,\delta x^*\rangle\right]\right|\\
	&\quad+2\left|\int_{0}^{T}
	\left[\langle\,\hat{H}_{ux}\delta \hat x-\tilde H^*_{ux}\delta x^*,v'\rangle\right]\right|\\
	&\le\int_{0}^{T}|\langle\,\hat{H}_{xx}\delta \hat x,\delta \hat x-\delta x^*\rangle|+
	\int_{0}^{T}|\langle\,\hat{H}_{xx}\delta\hat x-\tilde H^*_{xx}\delta x^*,\dex^*\rangle|\\
	&\quad+
	2\int_{0}^T|\langle\,\hat{H}_{ux}\delta \hat x- \tilde H^*_{ux}\dex^*,v'\rangle|\\
	&\le \|\hat{H}_{xx}\delta \hat x\|_1\|\delta \hat x-\dex^*\|_\infty+\big(\|\hat{H}_{xx}(\delta\hat x-\dex^*)\|_1\\
	&\quad+
	\|(\hat{H}_{xx}-\tilde H^*_{xx})\dex^*\|_1\big)\|\delta x^*\|_\infty\\
	&\quad+\big(\|\hat{H}_{ux}(\delta \hat x-\dex^*)\|_\infty+\|(\hat{H}_{ux}- \tilde H^*_{ux})\dex^*\|_{\infty}\big)\|v'\|_1.
	\end{align*}
	Taking (\ref{Edesi})-(\ref{Edesf}) into account, the result follows.
\end{proof}

\begin{thm}\label{Tstaa2}
	There exist $ \zeta, \tilde a,\tilde b>0$ and $\tilde\kappa>0$ such that if
	$\pi\in \mathcal H_\zeta$ and $y^*\in B_Y(\hat y,a)$ is a solution for  problem $(\p_\pi)$, then the map 
	$\Phi_\pi$ is SMsR at $y^*$ for zero with parameters $ \tilde a,\tilde b,\tilde\kappa$.
\end{thm}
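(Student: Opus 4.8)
The plan is to verify that problem $(\p_\pi)$ satisfies Assumption~(A2) at its solution $y^*=(x^*,p^*,u^*)$ with constants that do not depend on $\pi$, and then to invoke the implication ``(A2) $\Rightarrow$ SMsR'' of \cite[Theorem 3.1]{Osmol+Vel-19} applied to $(\p_\pi)$, noting that $\Phi_\pi$ is (up to the sign convention in the state-equation component) the optimality map of $(\p_\pi)$. The key point is that the proof of that implication is quantitative: the resulting SMsR parameters are obtained as explicit functions of the constants $c_0,\al_0,\gamma_0$ in~(A2) and of the $W^{1,\infty}$-bounds on the data and of $\bar M$. For $\pi\in\HH_\zeta$ these data bounds are controlled uniformly in $\pi$, so, once the constants in~(A2) are chosen uniformly, the parameters $\tilde a,\tilde b,\tilde\kappa$ come out uniform as well. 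Thus everything reduces to a uniform verification of~(A2) for $(\p_\pi)$ at $y^*$.

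I would first record the a priori estimates. Put $\rho:=\|\tilde f-f\|_{1,\infty}+\|\tilde g-g\|_{1,\infty}\le\zeta$. By Proposition~\ref{Tbim}, $d_Y(\hat y,y^*)\le\kappa'\rho$; in particular $\|\dot x^*-\dot{\hat x}\|_1$, $\|\dot p^*-\dot{\hat p}\|_1$ and $\|u^*-\hat u\|_1$ are all $\le\kappa'\rho$, hence, using $x^*(0)=\hat x(0)$ and $p^*(T)=\hat p(T)=0$,
\bd
\|x^*-\hat x\|_\infty\le\kappa'\rho,\qquad \|p^*-\hat p\|_\infty\le\kappa'\rho .
\ed
I would also note that Lemma~\ref{Lestgam} holds with $v-u^*$ replaced by an arbitrary $\deu\in L^1$: its proof uses only the bounds on the linearized trajectories $\delta\hat x,\dex^*$ and their difference, together with the data-closeness estimates $\|\hat H_{xx}-\tilde H_{xx}^*\|_1=O(\rho)$ and $\|\hat H_{ux}-\tilde H_{ux}^*\|_\infty=O(\rho)$ established there, none of which depends on the special form $v'=v-u^*$. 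Hence there is $\eta>0$ with $|\Gamma(\deu)-\Gamma_\pi(\deu)|\le\eta\,\rho\,\|\deu\|_1^2$ for all $\deu\in L^1$ and all small $\rho$.

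The delicate step is to show that the switching function of $(\p_\pi)$,
\bd
\sigma^*:=\nabla_u\tilde H(x^*,p^*,u^*)=\tilde s(x^*)+\tilde B(x^*)^\top p^* ,
\ed
is $O(\rho)$-close to $\hat\sigma$ in $W^{1,\infty}$. The bound $\|\sigma^*-\hat\sigma\|_\infty\le c\rho$ is immediate from the estimates above and from $\pi\in\HH_\rho$ (it uses only closeness of $\tilde s,\tilde B$ to $s,B$, i.e. first derivatives of the data). The derivative is the subtle part: $\dot x^*-\dot{\hat x}$ and $\dot p^*-\dot{\hat p}$ are \emph{not} small in $L^\infty$ (since $u^*$ and $\hat u$ differ in a bang-bang manner), so the naive bound on $\dot\sigma^*-\dot{\hat\sigma}$ fails. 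To handle it I would use the identity that holds along any trajectory $(x,p,u)$ of the optimality system of an affine problem: differentiating $\nabla_u H(x,p)=s(x)+B(x)^\top p$ via $\dot x=a(x)+B(x)u$, $\dot p=-\nabla_x H(x,p,u)$ and collecting the terms carrying $u$ directly,
\bd
\frac{\dd}{\dd t}\nabla_u H(x(t),p(t))=\Psi\bigl(x(t),p(t)\bigr)+\bigl(H_{ux}B-(H_{ux}B)^\top\bigr)\big|_{(x(t),p(t))}\,u(t),
\ed
where $\Psi$ is a fixed expression in $(x,p)$ built from $a,B,s,w$ and their first and second derivatives. For the reference trajectory the assumed symmetry of $\hat H_{ux}\hat B$ annihilates the last term, so $\dot{\hat\sigma}=\Psi(\hat x,\hat p)$; for $(\p_\pi)$ one gets $\dot\sigma^*=\Psi^\pi(x^*,p^*)+\mathcal A^*u^*$, $\mathcal A^*:=\tilde H_{ux}^*\tilde B^*-(\tilde H_{ux}^*\tilde B^*)^\top$. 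Since $\hat H_{ux}\hat B$ is symmetric and $\|\tilde H_{ux}^*\tilde B^*-\hat H_{ux}\hat B\|_\infty=O(\rho)$ (from Proposition~\ref{Tbim}, the estimates in the proof of Lemma~\ref{Lestgam}, and $\|\tilde B^*-\hat B\|_\infty=O(\rho)$), the antisymmetric matrix $\mathcal A^*$ satisfies $\|\mathcal A^*\|_\infty=O(\rho)$; as $\|u^*\|_\infty$ is bounded ($U$ is compact), $\mathcal A^*u^*$ is $O(\rho)$ in $L^\infty$, while $\Psi^\pi(x^*,p^*)$ is $O(\rho)$-close to $\Psi(\hat x,\hat p)$ by the same type of data- and trajectory-closeness estimates as in Lemma~\ref{Lestgam}. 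This yields $\|\dot\sigma^*-\dot{\hat\sigma}\|_\infty\le c\rho$, hence $\|\sigma^*-\hat\sigma\|_{1,\infty}\le c\rho$. I expect this step --- isolating and killing the bang-bang part of $\dot\sigma^*$ via the symmetry of $\hat H_{ux}\hat B$ --- to be the main obstacle.

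Finally I would assemble. Set $c_0^*:=c_0/2$, $\al_0^*:=\al_0/2$, $\gamma_0^*:=\gamma_0/2$ (with $c_0,\al_0,\gamma_0$ from~(A2) for the reference problem). Take $\deu=u'-u$ with $u',u\in\U$, $\|u'-u^*\|_1,\|u-u^*\|_1\le\al_0^*$, and $\sigma\in\B_{W^{1,\infty}}(\sigma^*;\gamma_0^*)\cap(-N_\U(u))$. For $\rho$ small, $\|u'-\hat u\|_1,\|u-\hat u\|_1\le\al_0^*+\kappa'\rho\le\al_0$ and $\|\sigma-\hat\sigma\|_{1,\infty}\le\gamma_0^*+c\rho\le\gamma_0$, while $\sigma\in-N_\U(u)$; so Assumption~(A2) of the reference problem gives $\int_0^T\lll\sigma,\deu\rrr\,\dd t+\Gamma(\deu)\ge c_0\|\deu\|_1^2$, and together with the extended Lemma~\ref{Lestgam},
\bd
\int_0^T\lll\sigma(t),\deu(t)\rrr\,\dd t+\Gamma_\pi(\deu)\ \ge\ (c_0-\eta\rho)\,\|\deu\|_1^2\ \ge\ \tfrac{c_0}{2}\,\|\deu\|_1^2
\ed
once $\rho\le c_0/(2\eta)$ --- which is exactly~(A2) for $(\p_\pi)$ at $y^*$ with constants $c_0^*,\al_0^*,\gamma_0^*$. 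Choosing $\zeta$ to be the smallest of the finitely many smallness thresholds on $\rho$ used above (including those ensuring the applicability of Proposition~\ref{Tbim}, Lemma~\ref{LSbi} and Lemma~\ref{Lestgam}), \cite[Theorem 3.1]{Osmol+Vel-19} applied to $(\p_\pi)$ then delivers SMsR of $\Phi_\pi$ at $y^*$ for zero with parameters $\tilde a,\tilde b,\tilde\kappa$ independent of $\pi$ and of $y^*$, as explained in the first paragraph.
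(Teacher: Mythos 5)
Your proposal is correct and follows essentially the same route as the paper's proof: Proposition \ref{Tbim} for the closeness of $y^*$ to $\hat y$, the $W^{1,\infty}$-closeness of the switching functions (using the symmetry of $\hat H_{ux}\hat B$ for the derivative estimate), Lemma \ref{Lestgam} to pass from $\Gamma$ to $\Gamma_\pi$ at the cost of $\eta\rho\|\cdot\|_1^2$, and finally \cite[Theorem 3.1]{Osmol+Vel-19} with parameters controlled uniformly over $\HH_\zeta$. The only (harmless) differences are that you verify the full Assumption (A2) for $(\p_\pi)$ whereas the paper checks only the single instance $\sigma=\tilde\sigma^*$, $u=u^*$, i.e.\ inequality \eqref{Eprop}, which is all the cited theorem requires, and that you spell out the estimate $\|\dot{\tilde\sigma}^*-\dot{\hat\sigma}\|_\infty=O(\rho)$ that the paper only indicates parenthetically.
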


\begin{proof}{}
	Let $c_0$, $\al_0 > 0$ and $\gamma_0 > 0$ be the numbers in Assumption (A2). 
	If $y^*$ is a solution for problem $(\p_\pi)$, we have $0\in\Phi_{\pi}(y^*)$. 
	By Proposition \ref{Tbim}, there exists $\zeta>0$ such that for any 
	$\pi\in \mathcal H_\zeta$,  $d_Y(\hat y,y^*)<\kappa'\zeta$ 
	for some constant $\kappa'>0$. We consider  $\zeta$  small enough to guarantee 
	$\|\hat \sigma-\tilde\sigma^*\|_{1,\infty}\le\gamma_0$ 
	(in the estimation of $\| \dot{\hat \sigma} -\dot{\tilde\sigma}^*\|_{\infty}$ we use the symmetry of $\hat H_{ux} \hat B$
	similarly as in the estimation before (\ref{EH12})) and $\|\hat u-u^*\|_1< \alpha_0/2$. 
	Let $\tilde \alpha_0:= \alpha_0/2$, so $B_{L^1}(u^*;\tilde \alpha_0)\subset B_{L^1}(\hat u;\alpha_0)$. 
	
	By Assumption (A2),
	\be
	\int_{0}^{T}\langle\,\tilde \sigma^*,v-u^*\rangle+\Gamma(v-u^*)\ge c_0\|v-u^*\|_1^2
	\hspace*{0.3cm}\forall{v\in\mathcal U\cap B_{L^1}(u^*;\tilde \alpha_0)},
	\ee
	or
	
	\bd
	\int_{0}^{T}\langle\,\tilde \sigma^*,v-u^*\rangle+\Gamma_\pi(v-u^*)\ge c_0\|v-u^*\|_1^2+
	\Big[\Gamma_\pi(v-u^*)-\Gamma(v-u^*)\Big],
	\ed
	for all ${v\in\mathcal U\cap B_{L^1}(u^*;\tilde \alpha_0)}$.
	Taking into account Lemma \ref{Lestgam}, we can choose $\zeta$ smaller if needed to ensure
	\bd 
	\Gamma_\pi(v-u^*)-\Gamma(v-u^*)\ge -\frac{c_0}{2}\|v-u^*\|_1^2\hspace*{0.5cm}
	\forall{v\in\mathcal U\cap B_{L^1}(u^*;\tilde\alpha_0)}.
	\ed
	Thus,
	\be\label{Eprop}
	\int_{0}^{T}\langle\,\sigma^*,v-u^*\rangle+\Gamma_{\pi}(v-u^*)\ge
	\frac{c_0}{2}\|v-u^*\|_1^2\hspace*{0.5cm}\forall{v\in\mathcal U\cap B_{L^1}(u^*;\tilde\alpha_0)}.
	\ee
	Let $L$ be a bound for the Lipschitz constants of $f,g$ and $H$ their first derivatives in $x$, 
	and $H_{xu},H_{up}$. It is easy to see that $\tilde L:=L+2(1+\bar M)\zeta$ is a bound for the Lipschitz constants 
	of $\tilde f,\tilde g$ and $\tilde H$, their first derivatives in $x$, and $\tilde H_{xu},\tilde H_{up}$ for all 
	$\pi\in \mathcal H_\zeta$. Analogously, we can find a bound $\tilde M$, depending on $\zeta$ and $\bar M$, 
	for the functions $\tilde f, \tilde H$ and their derivatives (see Remark 2.1 in \cite{Osmol+Vel-19}).
	Finally, (\ref{Eprop}) implies that the hypotheses of Theorem 3.1 in \cite{Osmol+Vel-19} are fulfilled. We conclude that $\Phi_\pi$ is SMsR at $y^*$ for zero. 
	According to that theorem, the parameters of SMsR can be chosen as depending only on 
	$\tilde M, T, c_0$ and $\tilde L$. This completes the proof.
\end{proof}

From now on, we only consider elements $\pi\in \mathcal H_\zeta$, since the latter theorem 
ensures that each map $\Phi_\pi$ is SMsR at a solution $y^*\in B_Y(\hat y,a)$ for zero. 
This automatically ensures that $y^*$ is the unique local  solution in $B_Y(\hat y,a)$  of the inclusion $0\in\Phi_\pi(y)$.

Let $\left\lbrace t_n\right\rbrace_{n=0}^{N}$ be a grid on $[0,T]$ with equally spaced nodes and 
a step size $h$, that is, $t_k=k T/N$ for $i=0,\dots, N$. Given a $\pi\in \mathcal H_\zeta$, 
the discrete time problem $(\mathcal{P}_{\pi}^h)$ obtained by the Euler discretization is

\be
\displaystyle\min_{u\in U^N}\left[ h\sum_{i=0}^{N-1}\tilde g(x_i,u_i)\right]
\ee
subject to
\be
x_{i+1}=x_i+h\tilde f(x_i,u_i),\hspace*{0.3cm}x_0=x^0.
\ee
The local form of the discrete time minimum principle implies that for any locally optimal solution $(x,u)$ 
of problem ($\mathcal{P}_{\pi}^h$) there exists a vector $p=(p_0,\dots,p_N)$ such that
\begin{align}\label{Edmp1}
x_{i+1}&=x_i+h\tilde f(x_i,u_i), \hspace*{0.1cm}x_0=x^0,\\
\lambda_i&=\lambda_{i+1}+h\nabla_{\!\! x}\tilde H(x_i,u_i,p_{i+1}), \hspace*{0.1cm} p_N=0,\\
0&\in\nabla_{\!u}\tilde H(x_i,u_i,p_{i+1})+N_{U}(u_i),\label{Edmp2}
\end{align}
where $i$ runs between $0$ and $N-1$. Let $(x^h,u^h)$ be a solution of problem ($\mathcal{P}_{\pi}^h$) 
and $p^h$ the corresponding co-state vector, so that $y^h=(x^h,p^h,u^h)$ satisfies (\ref{Edmp1})-(\ref{Edmp2}). 
In order to compare this solution with the reference solution of $y^*=(x^*,p^*,u^*)$ of the continuous-time problem $(\p_\pi)$,
we embed the sequence $(x^h,p^h,u^h)$ into the space $W^{1,1}\times W^{1,1}\times L^1$ 
considering $y_h=(x_h,p_h,u_h)$ defined by
\be\label{Eembedded}
x_h(t):=x_i^h+\frac{t-t_i}{h}(x^h_{i+1}-x^h_i),\hspace*{0.3cm}u_h(t):=u_i^h,\hspace*{0.3cm}
p_h(t):=p_i^h+\frac{t-t_i}{h}(p^h_{i+1}-p^h_i),
\ee
for $t\in[t_i,t_{i+1})$, $i=0,\dots, N-1$.

\bino
We need the following technical assumption to apply results in \cite{Osmol+Vel-19}. It is a crucial assumption, 
at least because it may happen that $y_h$ is close to some other local solution of the continuous-time problem, 
and we have to eliminate this possibility.

\bino
{\em Assumption (C1).} Let $\pi\in\mathcal H_\zeta$. We assume that problem $(\p_\pi)$ has 
a solution $y^*$ in $B_Y(\hat y,a)$. Moreover, the embedded solution $y_h$ in (\ref{Eembedded}) 
of problem $(\mathcal P_\pi^h)$ belongs to $B_Y(y^*,\tilde a)$  for all sufficiently small $h$.

\bino
The following theorem is a direct consequence of Theorem \ref{Tstaa2} and Theorem 5.1 in \cite{Osmol+Vel-19}.

\begin{thm}
	There exists a positive constant $C$ such that for 
	all $\pi\in \mathcal H_\zeta$ for which Assumption (C1) holds, the estimate
	\be\label{Eeuler}
	\|x_h-x^*\|_{1,1}+\|p_h-p^*\|_{1,1}+\|u_h-u^*\|_1\le Ch
	\ee
	holds for all sufficiently small $h$.
\end{thm}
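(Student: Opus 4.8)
The plan is to obtain \eqref{Eeuler} as a direct consequence of the uniform strong metric sub-regularity of $\Phi_\pi$ established in Theorem~\ref{Tstaa2} together with the standard local consistency estimate for the Euler scheme, exactly in the spirit of Theorem~5.1 in \cite{Osmol+Vel-19}. First I would fix $\pi = (\tilde f,\tilde g) \in \mathcal H_\zeta$ for which Assumption~(C1) holds, let $y^* = (x^*,p^*,u^*) \in B_Y(\hat y,a)$ be the associated solution of $(\p_\pi)$, and let $y_h = (x_h,p_h,u_h)$ be the embedded discrete solution defined in \eqref{Eembedded}. By Theorem~\ref{Tstaa2}, $\Phi_\pi$ is SMsR at $y^*$ for zero with parameters $\tilde a, \tilde b, \tilde\kappa$ that do \emph{not} depend on $\pi$, because they depend only on the bounds $\tilde M, T, c_0, \tilde L$ that are uniform over $\mathcal H_\zeta$. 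This uniformity is the reason the final constant $C$ will not depend on $\pi$.

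The main step is then to estimate the residual, i.e.\ to produce $z_h \in \Phi_\pi(y_h)$ with $\|z_h\|_Z \le C_0 h$ and $C_0$ uniform over $\mathcal H_\zeta$. On each subinterval $[t_i,t_{i+1})$ one has $\dot x_h \equiv \tilde f(x_i^h,u_i^h)$, $\dot p_h \equiv -\nabla_{\!\! x}\tilde H(x_i^h,p_{i+1}^h,u_i^h)$ and $u_h \equiv u_i^h$, while $|x_h(t) - x_i^h| \le c\,h$ and $|p_h(t) - p_{i+1}^h| \le c\,h$ since $\dot x_h$ and $\dot p_h$ are bounded (uniformly, via $\tilde M$). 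Subtracting the discrete relations \eqref{Edmp1}--\eqref{Edmp2} from the continuous optimality map and using the uniform Lipschitz bound $\tilde L$ for $\tilde f$, $\nabla_{\!\! x}\tilde H$ and $\nabla_{\!u}\tilde H$ in $(x,p)$ gives componentwise bounds $O(h)$ for the first two components; for the normal-cone component I would use \eqref{Edmp2} to pick, on each $[t_i,t_{i+1})$, an element $\nu_i \in N_U(u_i^h) = N_U(u_h(t))$ with $\nu_i = -\nabla_{\!u}\tilde H(x_i^h,p_{i+1}^h,u_i^h)$, so that the element $\nabla_{\!u}\tilde H(x_h(t),p_h(t),u_h(t)) + \nu_i$ of $\Phi_\pi^3(y_h)(t)$ equals $\nabla_{\!u}\tilde H(x_h(t),p_h(t),u_i^h) - \nabla_{\!u}\tilde H(x_i^h,p_{i+1}^h,u_i^h)$, which is again $O(h)$. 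Collecting the three estimates yields $\|z_h\|_Z \le C_0 h$ with $C_0$ depending only on $\tilde M, T, \tilde L$.

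Finally, for $h$ small enough we have $z_h \in B_Z(0,\tilde b)$, and Assumption~(C1) guarantees $y_h \in B_Y(y^*,\tilde a)$; since $z_h \in \Phi_\pi(y_h)$, the SMsR property of $\Phi_\pi$ at $y^*$ for zero gives
\bd
\|x_h-x^*\|_{1,1} + \|p_h-p^*\|_{1,1} + \|u_h-u^*\|_1 = d_Y(y_h,y^*) \le \tilde\kappa\,\|z_h\|_Z \le \tilde\kappa C_0\,h,
\ed
so the claim holds with $C := \tilde\kappa C_0$, which is independent of $\pi$. I expect the only genuinely technical part to be the consistency estimate $\|z_h\|_Z \le C_0 h$ together with the bookkeeping needed to keep $C_0$ (and the SMsR parameters) uniform over the whole family $\mathcal H_\zeta$; but once Theorem~\ref{Tstaa2} supplies the uniform SMsR parameters, this is precisely what Theorem~5.1 in \cite{Osmol+Vel-19} delivers, so the argument reduces to invoking those two results. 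Assumption~(C1) is essential here: without it $y_h$ could lie near a different local solution of the continuous-time problem and the sub-regularity estimate at $y^*$ would not apply.
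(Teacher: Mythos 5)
Your proposal is correct and follows essentially the same route as the paper: uniform SMsR parameters from Theorem~\ref{Tstaa2}, a residual (consistency) estimate of order $h$ for the three components of $\Phi_\pi(y_h)$ obtained by comparing the discrete optimality system with the continuous one (the paper's $\Delta_1,\Delta_2,\Delta_3$ are exactly your $z_h$, with the normal-cone component handled via the discrete minimum principle just as you describe, the paper taking $p_i^h$ where you take $p_{i+1}^h$, an immaterial $O(h)$ difference), and then the sub-regularity inequality applied under Assumption~(C1), with the constant $C$ uniform because the SMsR parameters and the consistency constant depend only on $\tilde M, T, c_0, \tilde L$.
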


\begin{proof}{}
	By Theorem \ref{Tstaa2}, the parameters $\tilde a,\tilde b,\tilde \kappa$ of SMsR of $\Phi_\pi$ at $y^*$ 
	for zero are the same for all $\pi\in \mathcal H_\zeta$ satisfying Assumption (C1).
	
	Let $\pi\in\mathcal H_\zeta$. In order to make use of the SMsR property of the map $\Phi_\pi$, 
	we have to estimate the residuals 	
	\begin{align*}
	\Delta_1&:=\dot x_h-\tilde f(x_h,u_h),\\
	\Delta_2&:= \dot p_h+\nabla_x \tilde H(x_h,p_h,u_h),\\
	\Delta_3&:=\nabla_u \tilde H(x_i^h,p_i^h,u_i^h)-\nabla_u \tilde H(x_h,p_h,u_h), 
	\hspace*{0.1cm}t\in[t_i,t_{i+1}), \hspace*{0.08cm}i=0,\dots,N-1.
	\end{align*}
	
	Repeating the calculations in the proof of Theorem \cite[Theorem 5.1]{Osmol+Vel-19}, we obtain 
	\be
	\max\left\lbrace \|\Delta_1\|_1,\|\Delta_2\|_1,\|\Delta_3\|_\infty\right\rbrace
	\le \max\left\lbrace 1,T\right\rbrace \tilde L(1+2\tilde M)h,
	\ee
	where $\tilde L,\tilde M$ are the numbers in Theorem \ref{Tstaa2}. We can choose $h_0>0$ depending 
	on $\tilde L, \tilde M, T$ and $b$ so that $\|\Delta_1\|_1+\|\Delta_2\|_1+\|\Delta_3\|_\infty\le b$ for all $h\le h_0$. 
	The claim follows from the SMsR property of 
	$\Phi_\pi$ with $C:=3\kappa(1+2\tilde M)\tilde L\max\left\lbrace 1,T\right\rbrace$. 
	The proof is complete since this holds for any arbitrary $\pi\in\mathcal H_\zeta$ satisfying Assumption (C1).
\end{proof}

\end{document}